	\newtheorem{theorem}{Theorem}[section]
	\newtheorem*{blanktheorem}{Theorem}
	\newtheorem{lemma}[theorem]{Lemma}
	\newtheorem{question}[theorem]{Question}
	\newcommand{\bF}{\mathbb F}
	\newcommand{\bK}{\mathbb K}
	\newcommand{\bR}{\mathbb R}
	\newcommand{\cM}{\mathcal{M}}
	\DeclareMathOperator{\cl}{cl}
	\DeclareMathOperator{\poly}{poly}
	\DeclareMathOperator{\rk}{r}
	\newcommand{\del}{\setminus}
	\newcommand{\con}{/}
\begin{document}
\sloppy

 \begin{abstract}
We show that each real-representable matroid is a minor of a complex-representable excluded minor for real-representability.
More generally, for an infinite field $\bF_1$ and a field extension $\bF_2$, 
if $\bF_1$-representability is not equivalent to $\bF_2$-representability, then each $\bF_1$-representable matroid is a
minor of a $\bF_2$-representable excluded minor for $\bF_1$-representability.
 \end{abstract}

\title[On the excluded minors for real-representability]{On the complex-representable excluded minors for real-representability}
\author[Campbell, Geelen]{Rutger Campbell, Jim Geelen}
\address{Department of Combinatorics and Optimization,
	University of Waterloo, Canada}
\thanks{This research was partially supported by grants from the
Office of Naval Research [N00014-10-1-0851] and NSERC [203110-2011] as
well as a NSERC Alexander Graham Bell Canada Graduate Scholarship-Doctoral Program [PGSD3-489418-2016].}

\subjclass{}
\keywords{matroid, excluded minors, representable matroids}
\date{\today}
\maketitle

\section{Introduction}

Consider the problem of characterizing the set of excluded minors for the class of real-representable matroids.
For many classes, the excluded minors provide a concise characterization; for instance, 
Tutte~[\ref{Tuttebinary}] showed that a matroid is binary precisely when it does not contain a $U_{2,4}$-minor.
In contrast to this, Lazarson~[\ref{Lazarson}, Theorem 1] showed that there are infinitely many excluded minors
for real-representability. This in itself
does not preclude the possibility of a simple structural description.
For example, Bonin~[\ref{Lattice}, Theorem 3.1] described the excluded minors for
lattice path matroids, despite the fact that the list is infinite.

However, Mayhew, Newman, and Whittle~[\ref{MNW}] have effectively settled the
matter by proving the following striking result.
\begin{theorem}\label{MNWr}
Each real-representable matroid is a minor of an excluded minor for real-representability.
\end{theorem}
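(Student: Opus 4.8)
The plan is to make this a local statement about single-element minors and then to build an explicit gadget. Observe first that if a matroid $N$ is not real-representable but $N\setminus e$ and $N/e$ are real-representable for every element $e$, then $N$ is already an excluded minor for real-representability: any proper minor of $N$ has the form $N\setminus X/Y$ with $X\cup Y\neq\emptyset$, hence is a minor of $N\setminus e$ or of $N/e$ for some $e\in X\cup Y$, and the class of real-representable matroids is minor-closed. So it suffices, given a real-representable matroid $M$, to construct a matroid $N$ with (a) $M$ a minor of $N$, (b) $N$ not real-representable, and (c) $N\setminus e$ and $N/e$ real-representable for every element $e$ of $N$.

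For (a) and (b), I would attach to $M$ a ``certificate gadget'' $G$ --- glued along a common line by a generalized parallel connection --- whose effect is that every representation of $N$ over a field $\mathbb{F}$ amounts to a representation of $M$ over $\mathbb{F}$ together with a solution in $\mathbb{F}$ of a fixed polynomial system that is solvable over $\mathbb{C}$ but over no formally real field; the basic such system is $x^{2}+1=0$. Here I would invoke the classical principle that matroid dependencies simulate field arithmetic: via von Staudt-type addition and multiplication configurations (or suitably circuited spikes) one can build a matroid whose realization space over $\mathbb{F}$ is cut out by prescribed polynomial relations among scalars attached to a distinguished frame. Passing first to a representation of $M$ over a subfield of $\mathbb{R}$ finitely generated over $\mathbb{Q}$ --- possible since $M$ is finite --- keeps the amount of arithmetic the gadget must reconcile finite. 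Note that such a construction would in fact make $N$ complex-representable, which is the feature the rest of the paper is concerned with.

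The real difficulty is (c): the obstruction inside $N$ must be irredundant, so that deleting or contracting any single element frees up a real representation. For elements of the gadget this suggests presenting the ``unsatisfiable over $\mathbb{R}$'' system as a cycle of elementary relations, each individually satisfiable over $\mathbb{R}$, so that dropping any one relation --- the intended effect of a single deletion or contraction of a gadget element --- unlocks a real solution. For elements lying in the copy of $M$ the issue is sharper, since one then needs a real representation of all of $N\setminus e$ and of $N/e$: this forces the minor presentation $M=N\setminus X/Y$ and the interface between $M$ and $G$ to be engineered so that removing any element of $M$'s part also slackens the forced relation, perhaps after first replacing $M$ by a suitable extension of itself in which every element is essential. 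Verifying (b) and (c) then reduces to a finite case analysis over the type of the removed element, and the main obstacle is to design $G$ and the gluing so that all these cases succeed simultaneously --- ruling out every accidental real representation of $N$ while exhibiting an explicit real representation of each single-element deletion and contraction. I expect the bulk of the work to lie in this coordinatization bookkeeping rather than in any single conceptual step.
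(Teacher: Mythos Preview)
Your reduction to single-element deletions and contractions is correct and is how the paper proceeds as well. However, your construction diverges from the paper's at exactly the point where your proposal has a gap.

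The paper (in proving the more general Theorem~\ref{main}, of which Theorem~\ref{MNWr} is the special case with $\cM$ the class of all matroids) does not build a von Staudt-type gadget encoding $x^2+1=0$. Instead it starts from an \emph{abstract} excluded minor $L$ for real-representability, chosen (Lemma~\ref{special-pair}) to carry an incomparable pair $\{p,q\}$ such that the relaxations $L_{p\to q}$ and $L_{q\to p}$ are both real-representable. The target matroid is first massaged (Lemma~\ref{constructed}) so that its ground set partitions into two bases $(A,B)$; then $L$ and $N$ are spliced by the explicit principal-extension construction $M(L,p,q;N,A,B)$ of Section~\ref{CONSTRUCTION}. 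The payoff is that removing an element on $N$'s side --- the case you flag as hardest --- reduces, via Lemma~\ref{difference}, to the very same construction with $L$ replaced by $L_{p\to q}$ or $L_{q\to p}$, which are real-representable by design.

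Your plan has no mechanism of this kind. You correctly isolate the crux, but ``engineering the interface'' and ``replacing $M$ by an extension in which every element is essential'' are aspirations, not constructions: a generalized parallel connection along a fixed line leaves the copy of $M$ essentially decoupled from the gadget, so deleting or contracting an element of $M$ does nothing to the constraint $x^2+1=0$ living in $G$, and the resulting minor remains non-real-representable. Without a gluing that genuinely entangles every element of $M$ with the obstruction --- as the paper's does by threading $A$ and $B$ through the new points $a,b$ toward the deleted elements $p,q$ --- step~(c) fails for every element on $M$'s side.
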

This essentially says that the excluded minors are at least as structurally complex
as the real-representable matroids themselves.
In hindsight this result may not seem so surprising when we consider that matroids are rather wild in comparison
with representable matroids; for example, the number of $n$-element matroids is
$2^{\frac{1}{\poly(n)}2^n}$ whereas the number of representable $n$-element 
matroids is only $2^{\poly(n)}$, where in each case $\poly(n)$ denotes a function that
is bounded above and below 
by a polynomial; see~[\ref{Knuth}] and~[\ref{Nelson}] respectively. 
Surprisingly however,
the same issue arises even if we only try to describe the representable excluded minors for real-representability:
we prove that even the complex-representable excluded minors are at least as wild
as the class of real-representable matroids.
\begin{theorem}\label{mainr}
Each real-representable matroid is a minor of a complex-representable excluded minor for real-representability.
\end{theorem}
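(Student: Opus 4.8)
The plan is to build, for a given real-representable matroid $M$, a single matroid $N$ that has $M$ as a minor, is complex-representable, is not real-representable, and is \emph{minor-minimally} so, meaning that $N\setminus e$ and $N/e$ are real-representable for every $e\in E(N)$. Since an excluded minor for real-representability is exactly a non-real-representable matroid all of whose proper minors are real-representable, and every proper minor is a minor of some $N\setminus e$ or $N/e$, such an $N$ is a complex-representable excluded minor for real-representability with $M$ as a minor, which is the claim. (The same construction should work with $\bR$ replaced by any infinite field $\bF_1$ and $\bC$ by any extension $\bF_2$ over which representability differs, giving the more general statement.) Fixing a real matrix $A$ representing $M$, say in standard form $[I\,|\,A']$, I would construct $N$ by ``braiding'' $M$ together with a gadget that detects the difference between real- and complex-representability, arranged so that the detection genuinely involves every element of $N$.

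For the gadget I would use the classical von Staudt encoding of polynomial equations by point--line configurations: there is a matroid $G$ with a distinguished element $g$ such that in every $\bF$-representation the coordinate of $g$ (relative to a distinguished basis) is forced to be a root of a fixed polynomial $p$; taking $p=x^2+1$ makes $G$ complex-representable but not real-representable, and one can arrange $G$ to be \emph{fragile} in the sense that deleting or contracting any of its elements removes the constraint and leaves a real-representable matroid. The core of the proof is to wire $G$ into $M$ so that, in every representation of the combined matroid, the value fed to the detector is a prescribed rational combination (a product of cross ratios, say) of the coordinates of the elements of $M$ together with some auxiliary generic points adjoined to $M$ --- a combination over which the representations of this enlarged $M$ range in a positive-dimensional family, but which the detector pins to a value that is impossible over $\bR$. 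Here the infinitude of $\bR$ does the real work: it guarantees that the enlarged $M$ has enough representational slack to carry a generic value into the detector, and, decisively, that when any single element is removed the now underdetermined combination can be re-specialised to a \emph{real} value avoiding the finitely many degeneracies, so that what remains is real-representable.

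The verification then has three parts. First, $N$ is complex-representable: extend a fixed real (hence complex) representation of $M$ by the generic points and thread it through $G$ using a complex root of $x^2+1$. Second, $N$ is not real-representable: a real representation would make the combination fed to the detector a real number, contradicting the detector. Third, for each $e\in E(N)$ both $N\setminus e$ and $N/e$ are real-representable --- if $e$ lies in $G$ this is the fragility of $G$, if $e$ lies in the $M$-part or among the auxiliary points this is the re-specialisation argument, and the few elements linking the two parts are handled directly. I expect the third part, namely arranging that the obstruction is genuinely \emph{global}, to be the main difficulty: the braiding must be set up so that the failure of real-representability cannot be witnessed by any proper minor --- which would replace $N$ by a strictly smaller excluded minor and lose the containment of $M$ --- so every element of $M$, and not just of $G$, has to be made indispensable to the obstruction. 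It is precisely this irreducibility, against the intuition that a ``localised'' bad gadget should suffice, that makes the theorem nontrivial, and getting the combinatorics of the braiding to deliver it while keeping $N$ complex-representable is where the work lies.
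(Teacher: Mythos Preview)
Your proposal is a plan, not a proof: you correctly isolate the crux --- arranging that \emph{every} element of the combined matroid is indispensable to the obstruction --- and then explicitly leave it open (``getting the combinatorics of the braiding to deliver it \ldots\ is where the work lies''). The von Staudt encoding of $x^2+1$ and the re-specialisation heuristic are plausible ingredients, but without a concrete braiding and a verification that removing any single element of the $M$-side actually frees the constraint while keeping the remainder real-representable, there is no argument. That step is the entire content of the theorem; everything else in your outline is routine.

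The paper resolves precisely this difficulty by a different mechanism that avoids von Staudt encodings altogether. It takes an \emph{arbitrary} complex-representable excluded minor $L$ for real-representability, chosen (Lemma~\ref{special-pair}) to carry an incomparable pair $\{p,q\}$ with both $L_{p\rightarrow q}$ and $L_{q\rightarrow p}$ real-representable. The given matroid $N$ is first normalised so that $E(N)$ partitions into two bases $(A,B)$ (Lemma~\ref{constructed}), then glued to $L$ across a $3$-separation: freely place $a$ in $\cl(E(N)\cup\{p\})$ and $b$ in $\cl(E(N)\cup\{q\})$, tilt each element of $A$ toward $a$ and of $B$ toward $b$, and delete $p,q$. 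The result $M(L,p,q;N,A,B)$ is $(N\oplus L)$-constructed via principal extensions, hence complex-representable; it is not real-representable because any real representation would allow a unique re-insertion of $p,q$ recovering $L$ as a restriction (Lemmas~\ref{unique} and~\ref{retrieval}). The step you flagged as hard is handled by Lemma~\ref{difference}: deleting any $e\in A\cup\{a\}$ or contracting any $f\in B\cup\{b\}$ yields the \emph{same} minor as the corresponding operation on $M(L_{p\rightarrow q},p,q;N,A,B)$, which is real-representable since $L_{p\rightarrow q}$ is. Globality is thus achieved not by threading cross-ratios through $N$, but because $A\cup\{a\}$ is the unique subset of its side spanning $p$ and not $q$ (Lemma~\ref{pnotq}); remove one element and the two constructions agree. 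Your hoped-for ``re-specialisation to a real value'' is realised, exactly and combinatorially, by the passage from $L$ to $L_{p\rightarrow q}$.
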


More generally, one might consider characterizing the excluded minors for real representability within any
given minor closed class $\cM$. Of
particular interest is when $\cM$
contains all real-representable matroids, since 
this would characterize real-representability.
We  extend Theorem~\ref{mainr} by proving that
the excluded-minors within 
any ``natural'' proper superset $\cM$,
are at least as wild as real-representable matroids themselves.

Here we say that a class $\cM$ is \emph{natural},
when it is closed under isomorphism, minors, adding coloops,
direct sums, and ``principal extensions" (which are defined in Section~\ref{SUITABLE}).
Note that the class of real representable matroids is natural,
so it is reasonable to consider supersets that are also closed under these operations.

We prove the following generalization of Theorem~\ref{mainr}.
\begin{theorem}\label{main}
If $\bF$ is an infinite field and 
$\cM$ is a natural class that properly contains all $\bF$-representable matroids,
then each $\bF$-representable matroid is a minor of an excluded minor for $\bF$-representability
that is contained in $\cM$.
\end{theorem}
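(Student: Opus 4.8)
The plan is, given an $\bF$-representable matroid $N$, to construct an excluded minor $M$ for $\bF$-representability with $M\in\cM$ and $N$ a minor of $M$; since ``is a minor of'' is transitive, it even suffices to do this for $N$ ranging over some cofinal family of $\bF$-representable matroids, so I will feel free to first replace $N$ by any convenient $\bF$-representable matroid having $N$ as a minor (for instance one that is $3$-connected, of large rank, and contains a prescribed generic sub-configuration to which gadgets can be attached). The second ingredient is a fixed ``seed of non-representability'': since $\cM$ is minor-closed and properly contains every $\bF$-representable matroid, it contains a non-$\bF$-representable matroid, hence a minor-minimal one, i.e.\ an excluded minor $B$ for $\bF$-representability with $B\in\cM$; as the class of $\bF$-representable matroids is closed under direct sums and $2$-sums, $B$ may be taken $3$-connected.

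Next I would build $M$ by amalgamating $N$ with $B$. The idea is to place $N$ and a copy of $B$ inside one matroid and to link them by a sequence of principal extensions (adding further generic elements, and some coloops, also via principal extensions), the links being arranged so as to spread the obstruction carried by $B$ across the whole of $M$, and in particular across the elements of $N$, so that $B$ does not itself survive as a minor of $M$. One wants the construction to satisfy: (1) $M$ is obtained from $\bF$-representable matroids and $B$ using only direct sums, additions of coloops, and principal extensions (possibly followed by one restriction/contraction), whence $M\in\cM$; (2) $N$ is a minor of $M$, visibly; (3) $M$ is not $\bF$-representable, since any $\bF$-representation of $M$ would, through the constraints imposed by the links, yield an $\bF$-representation of $B$, which is impossible; and (4) for every element $e$ of $M$, both $M\del e$ and $M\con e$ are $\bF$-representable. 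Properties (3) and (4) say that $M$ is an excluded minor for $\bF$-representability, so with (1) and (2) we are done.

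Property (4) is where the real work lies and is the step I expect to be the main obstacle. For each single-element deletion and contraction of $M$ one must exhibit an explicit $\bF$-representation, built by gluing together $\bF$-representations of $N$, of $B\del b$ and $B\con b$ for the relevant $b$ (these exist because $B$ is an excluded minor), and of the free parts coming from the coloops. Two things make this plausible: first, a principal extension of an $\bF$-representable matroid is again $\bF$-representable precisely because $\bF$ is infinite---one can always pick a sufficiently generic vector in the relevant subspace---which is exactly where the hypothesis on $\bF$ is used; and second, deleting or contracting any single element of $M$ ``unlocks'' one of the links, so that the obstruction inherited from $B$ is no longer forced and the pieces can be assembled into a genuine representation. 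Making this work requires a careful case analysis according to how each element of $M$ sits relative to the links, the copy of $B$, and the copy of $N$, and choosing the amalgamation so that these cases stay manageable is the heart of the argument.
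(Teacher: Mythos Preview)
Your high-level plan matches the paper's: start from the given $\bF$-representable $N$ (first massaged so that $E(N)$ splits into two bases), take an excluded minor $L\in\cM$ for $\bF$-representability (your $B$), and splice them together using only direct sums, coloops, and principal extensions so that the result lies in $\cM$, has $N$ as a minor, and is itself an excluded minor. But the proposal is missing the one idea that makes your step~(4) go through, and without it the argument does not close. The difficulty is exactly the one you flag: when $e$ lies in the $N$-part or in the linking gadget, $M\del e$ and $M\con e$ still carry the full obstruction from $L$; invoking the representable matroids $L\del b$ and $L\con b$ helps only when $e$ sits on the $L$-side. For the remaining elements you need a \emph{different} representable relaxation of $L$, and your sketch does not supply one.

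The paper's key device is to choose $L$ not arbitrarily, but with an incomparable pair $p,q\in E(L)$ such that both ``slides'' $L_{p\to q}$ and $L_{q\to p}$ (replace $p$ by a point freely placed on $\cl\{p,q\}$, and symmetrically) are $\bF$-representable. Such an $L$ exists by a short extremal argument: among excluded minors for $\bF$-representability lying in $\cM$, take one of minimum size and then freest; it cannot be a gammoid, hence has an incomparable pair, and freeness forces the two slides into the representable class. The amalgam is then built so that $p$ and $q$ are deleted at the end, and so that for every $e$ on the $N$-side one has $M\del e = M'\del e$ or $M\con e = M'\con e$, where $M'$ is the same construction with $L$ replaced by $L_{p\to q}$ or $L_{q\to p}$; this is what yields~(4). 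Your $3$-connectedness reduction does not provide this replacement mechanism. Likewise, your sketch of~(3) needs the precise form of the link: the paper arranges an exact $3$-separation with pinning sets on each side so that any $\bF$-representation of $M$ extends \emph{uniquely} by nonloop elements playing the roles of $p$ and $q$, and that extension restricts to $L$; the uniqueness lemma for such pinned extensions is what turns ``the links force $B$'' into an actual proof.
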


The original result of Mayhew, Newman, and Whittle 
also applies to any infinite field~[\ref{MNW}].
Mat\'{u}\v{s} has further generalized their result
to other classes 
such as the class of matroids that are algebraic over a given field,
and the class of almost entropic matroids
~[\ref{Matus}].

\section{Natural Classes}\label{SUITABLE}

Let $F$ be a flat of a matroid $M$.
A \emph{principal extension of $M$ into the flat $F$}
is the matroid $M'$ on a ground set $E(M)\cup\{e\}$
where $M'\del e=M$ and
a subset of $E(M)$ spans $e$
if and only if it spans $F$.
We say that $M'$ is obtained by \emph{freely placing $e$ in $F$}, and \emph{freely placing $e$} when $F=E(M)$.

Recall that a class $\cM$ is \emph{natural}
when it is closed under isomorphism, minors, adding coloops,
principal extensions, and direct sums.

Note that the class of all matroids is natural.
We now give some interesting natural matroid classes.

\subsection*{Matroids representable over a fixed infinite field}
For an infinite field $\bF$, it is well-known and easy to show that the class of 
$\bF$-representable matroids is closed under isomorphisms, minors, adding coloops,
principal extensions, and direct sums;
see~[\ref{MNW},~Lemma~2.1] for principal extensions.

\subsection*{Gammoids}
As we will see in Theorem~\ref{gammoids},
the class of {\em gammoids} (see Oxley~[\ref{O},~page 97~and~109] for a definition)
is the minimal class that
is closed under isomorphisms, minors, adding coloops, and principal extensions.
It is also easy to show that the class of gammoids is closed under direct sums.
Combining these results gives us that the class of gammoids is the minimal natural class.

\subsection*{Orientable matroids}
The class of {\em oriented matroids} (see Oxley~[\ref{O},~page~401] for a definition)
is natural; this follows from results in the book~[\ref{Orientable},~page~330]
by Bj\"{o}rner et al.

\subsection*{Algebraic matroids for a fixed field}
Let $\bF$ be a field.
A matroid $M$ is \emph{algebraic over $\bF$} when there exists a field extension $\bK$ of $\bF$ 
and a map $\phi$ assigning each element $e$ in $E(M)$ to an element $\phi(e)$ of $\bK$ such that $r_M(X)=\dim_{tr}(\bF (\phi(X))$ for each $X\subseteq E(M)$,
where $\dim_{tr}$ is the transcendence dimension over $\bF$. We call $\phi$ an \emph{algebraic representation over $\bF$}.
The class of matroids algebraic over a field $\bF$, is
closed under minors and principal extensions;
see~[\ref{O},~Corollary~6.7.14] and~[\ref{Matus},~Lemma~13] respectively.
The class of algebraic matroids is also closed under direct sums
(and, hence, also under adding coloops)
since we can declare variables from different transcendental extension
fields to be algebraically independent.

Furthermore, it follows directly from the definition
that the intersections of natural classes are also natural.
Thus, under the subset relation, natural classes of matroids form a lattice.

However, there are certainly interesting classes that are not \emph{natural}.
For a prime power $q$, the $\mbox{GF}(q)$-representable matroids are not a natural class:
the uniform matroid $U_{2,q+1}$ is $\mbox{GF}(q)$-representable while the principal extension $U_{2,q+2}$ is not.
More generally, as the class of gammoids is the minimal natural class,
any class of matroids that does not contain all gammoids is not natural,
regardless of how basic it is.

\section{Preliminaries}\label{PROPERTIES}

We use terminology and notation from Oxley~[\ref{O}].

\subsection*{$N$-constructed matroids}

For a matroid $N$, we say that a matroid $M$ is {\em $N$-constructed} if it can be obtained from $N$ by a sequence
of the following operations: relabelling elements, deletion, contraction, adding coloops, and principal extensions.

Consequently, if $N$ is a matroid in a natural class $\cM$ and $M$ is an $N$-constructed matroid,
then $M$ is contained in $\cM$.

Let $e$ be an element of a matroid $M$. Recall that the
{\em series extension} of $e$ in $M$ is the matroid $M'$
obtained by coextending $M$ by an element $e'$ so that $\{e,e'\}$ is a
series pair.
\begin{lemma}\label{series}
Let $e$ be an element of a matroid $M$. If $M'$ is the series extension of $e$
in $M$, then $M'$ is $M$-constructed.
\end{lemma}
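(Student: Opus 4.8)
The plan is to write the series extension $M'$ explicitly as the result of a short sequence of the allowed operations, and then to verify it is the series extension by a rank computation. We may assume that $e$ is not a coloop of $M$: otherwise $\{e\}$ is already a cocircuit, so no series pair $\{e,e'\}$ exists and there is nothing to prove; the case where $e$ is a loop will be absorbed below. Introduce a new element $e'$, form $M_1:=M\oplus\{e'\}$ by adding $e'$ as a coloop, form $M_2$ from $M_1$ by freely placing a new element $g$ in the flat $\cl_{M_1}(\{e,e'\})$, and let $M_3$ be obtained from $M_2\del e$ by relabelling $g$ as $e$. Each of these steps — adding a coloop, a principal extension, a deletion, a relabelling — is on the list in the definition of an $M$-constructed matroid, so $M_3$ is $M$-constructed, and it remains to check that $M_3=M'$.

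By the definition of the series extension (a \emph{series pair} being a two-element cocircuit) it suffices to verify that $(M_2\del e)\con e'\cong M$ via the relabelling $g\mapsto e$, and that $\{g,e'\}$ is a cocircuit of $M_2\del e$. Both follow from a direct rank computation. Writing $F:=\cl_{M_1}(\{e,e'\})$, and using $M_2\del g=M_1$ together with the fact that $e'$ is a coloop of $M_1$, the defining property of freely placing $g$ gives, for $Z\subseteq E(M_1)$, that $g\in\cl_{M_2}(Z)$ if and only if $Z$ spans $F$ in $M_1$, which holds exactly when $e'\in Z$ and $e\in\cl_M(Z\del e')$. Feeding this criterion into the rank function one evaluates $r_{M_2\del e}$ on an arbitrary subset and matches it against $r_M$: for the contraction statement one obtains $r_{M_2\del e}(\tilde X\cup\{e'\})-1=r_M(X)$ for every $X\subseteq E(M)$, where $\tilde X$ is $X$ with $e$ replaced by $g$ when $e\in X$; for the cocircuit statement one checks that $E(M)\del e$ has rank $r(M_2\del e)-1$ in $M_2\del e$ and is a flat there — it is a flat because its closure in $M_2$ absorbs $e$ (which lies on the line $\cl_{M_2}(\{g,e'\})$) but neither $e'$ nor $g$. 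The identical computation also settles the case where $e$ is a loop of $M$: there $F$ has rank $1$, so $g$ becomes parallel to $e'$ and $M_3=(M\del e)\oplus U_{1,2}$, the $U_{1,2}$ being on $\{e,e'\}$, which is precisely the series extension of a loop.

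I expect the main difficulty to be in locating the right construction rather than in any single verification step. The point is that the construction must be correct in full generality; in particular, if $e$ has parallel elements in $M$ then in $M'$ those elements are mutually parallel but not parallel to $e$, and the construction above handles this automatically, since those elements are never touched and simply remain at $e$'s old flat while the renamed $g$ sits generically on the line through that flat and $e'$. The natural simpler attempts fail: if one merely adds a coloop $f$ adjacent to $e$ and takes the principal extension along the line $\cl(\{e,f\})$, then contracting $f$ yields the \emph{parallel} extension of $e$, while deleting $f$ yields $M$ together with a new coloop — so the deletion of the original $e$, and the promotion of $g$ to its name, is exactly what turns the picture into a genuine series pair.
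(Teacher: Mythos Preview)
Your construction is exactly the paper's: add a coloop $e'$, freely place a point on the line through $e$ and $e'$, delete $e$, and relabel. The paper simply asserts that the result is the series extension, whereas you supply the rank/closure verification and treat the loop and coloop edge cases explicitly; so your proof is correct and follows the same approach, just with more detail than the paper provides.
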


\begin{proof}
Let $M_1$ be obtained from $M$ by adding a coloop $e'$ and then freely placing an
element $e''$ in the flat spanned by $\{e,e'\}$. Then $M'$ is obtained from $M_1\del e$ 
by relabelling $e''$ as $e$.
\end{proof}

The following result is essentially due to Mayhew, Newman, and Whittle~[\ref{MNW},~Lemma~2.2].
\begin{lemma}\label{constructed}
For any matroid $N$, there is an $N$-constructed matroid $N'$ such that $N'$ has an $N$-minor
and that the ground-set of $N'$ can be partitioned into two bases.
\end{lemma}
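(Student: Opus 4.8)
The plan is to construct $N'$ in two stages. First I would build an auxiliary $N$-constructed matroid $M$ that "spreads $N$ out" — it will have $N$ as a contraction-minor, $M/C\cong N$, in such a way that both $C$ and its complement in $E(M)$ are independent of full size — and then pad $M$ with freely placed points until the ground set splits into two bases.

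Write $E(N)=\{e_1,\dots,e_n\}$ and $r=r(N)$; the case $n=0$ is trivial, so assume $n\ge 1$. First adjoin $n$ coloops $c_1,\dots,c_n$ to $N$ and set $C=\{c_1,\dots,c_n\}$. Then, for $i=1,\dots,n$ in turn, apply the principal extension that freely places a new element $f_i$ into the flat spanned by $\{e_i\}\cup C$ in the current matroid. Finally delete $e_1,\dots,e_n$ and relabel each $f_i$ as $e_i$; call the result $M$. By construction $M$ is $N$-constructed. Using the defining property of a principal extension — a set spans the new element exactly when it spans the chosen flat — one checks by induction on $i$ that $\{f_1,\dots,f_n\}$ is independent in $M$ (when $f_i$ is added, $\{f_1,\dots,f_{i-1}\}$ has rank $i-1<n$, which is at most the rank of the flat spanned by $\{e_i\}\cup C$, so it does not span that flat and hence does not span $f_i$); one also checks that each $e_i$ lies in $\cl(\{f_i\}\cup C)$, and deduces from this that $r(M)=r+n$, that $C$ is independent, and that $M/C\cong N$ under $f_i\leftrightarrow e_i$. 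In particular $M$ has an $N$-minor, and $C$ and $\{f_1,\dots,f_n\}$ are disjoint independent sets with union $E(M)$.

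To finish: if $r=0$ then $r(M)=n=|C|=|\{f_1,\dots,f_n\}|$, so $C$ and $\{f_1,\dots,f_n\}$ are already two disjoint bases partitioning $E(M)$, and I take $N'=M$. If $r\ge 1$, apply $2r$ further principal extensions, successively placing new elements $g_1,\dots,g_{2r}$ freely into the whole ground set, and let $N'$ be the result; it is still $N$-constructed and still has an $N$-minor. Then $r(N')=r(M)=r+n$ and $|E(N')|=2n+2r=2\,r(N')$, and $B_1:=C\cup\{g_1,\dots,g_r\}$ and $B_2:=\{f_1,\dots,f_n\}\cup\{g_{r+1},\dots,g_{2r}\}$ are disjoint sets of size $n+r=r(N')$ whose union is $E(N')$; since each freely placed point raises the rank of any not-yet-spanning set by one, each of $B_1,B_2$ attains rank $n+r$ and is therefore a basis. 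This yields the required $N'$.

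The fussiest step — which I regard as the core of the argument — is the verification of the rank claims about $M$ directly from the definition of principal extensions: that $\{f_1,\dots,f_n\}$ is independent, that $e_i\in\cl(\{f_i\}\cup C)$, and hence that $M/C\cong N$. This is where the degeneracies of $N$ (loops, and elements in the closure of the others) have to be absorbed; once these facts are in place the rest is routine rank bookkeeping together with the rank-increment property of free placement. (Alternatively the last stage can be phrased through the base-packing theorem, using that $M$ is covered by the two independent sets $C$ and $\{f_1,\dots,f_n\}$, so no subset $X$ has $|X|>2\,r(X)$, a condition preserved by adjoining coloops and free points; but exhibiting $B_1$ and $B_2$ explicitly is more direct.)
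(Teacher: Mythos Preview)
Your construction is correct; the rank checks you flag as ``fussiest'' do go through, since for each $i$ one has both $f_i\in\cl(\{e_i\}\cup C)$ (by definition of the principal extension) and, when $e_i$ is not a loop, $f_i\notin\cl(C)$ and hence $e_i\in\cl(\{f_i\}\cup C)$; from these two inclusions the equality $r_{M}(\{f_i:i\in S\}\cup C)=r_N(\{e_i:i\in S\})+n$ follows for every $S$, giving $M/C\cong N$. Your argument differs from the paper's in the ``doubling'' mechanism. The paper first freely adds $2r$ elements $A_0\cup B_0$ (so that $A_0$ and $B_0$ are already bases), and then performs a \emph{series extension} of each $e\in E(N)$, invoking the preceding lemma that a series extension is $N$-constructed; the resulting series pair $\{e_1,e_2\}$ is split between the two sides of the partition. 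You instead adjoin all $n$ coloops at once and lift each $e_i$ into the large flat $\cl(\{e_i\}\cup C)$ before padding with $2r$ free points. The paper's route is a bit shorter because the series-extension lemma packages the rank bookkeeping you have to do by hand, and because the series pairs make independence of each half of the partition immediate; your route avoids that auxiliary lemma at the cost of the explicit verification that $M/C\cong N$. Both yield an $N'$ of size $2(n+r)$ with the same two-basis partition structure.
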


\begin{proof}
Let $A_0$ and $B_0$ denote two $r(N)$-element sets that are disjoint from
$E(N)$ and from each other. We extend $N$ by adding the elements
$A_0\cup B_0$ freely to obtain the matroid $N_1$; thus $A_0$ and 
$B_0$ are bases of $N_1$.
Next, we construct $N'$ from $N_1$ by a sequence of series extensions 
for each element in $E(N)$; we relabel the elements so that, for each $e\in E(N)$,
the corresponding series-pair in $N'$ is $\{e_1,e_2\}$.
Note $N'$ has bases
$A_1=A_0\cup\{e_1:e\in E(N)\}$ and
$B_1=B_0\cup\{e_2:e\in E(N)\}$ which partition $E(N')$, as required.
\end{proof}

\subsection*{Gammoids}

By combining known results,
we show that gammoids are the minimal class that
is closed under isomorphisms, minors, adding coloops, and principal extensions.
We can restate this as the follows.
\begin{theorem}\label{gammoids}
A matroid is a gammoid if and only if it is $U_{0,0}$-constructible.
\end{theorem}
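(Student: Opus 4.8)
The plan is to prove Theorem~\ref{gammoids} by establishing the two directions separately, leaning on the literature for the hard implication. For the forward direction, I would show that every gammoid is $U_{0,0}$-constructible. Recall that a gammoid is, up to isomorphism, obtained from a directed graph with a set of ``source'' vertices by taking as independent sets those subsets of a target vertex set that can be linked to the sources by vertex-disjoint paths; equivalently, by a classical result of Ingleton and Piff, every gammoid is a contraction of a transversal matroid, and every transversal matroid is a deletion (restriction) of a transversal matroid whose presentation is a ``free'' one. So it suffices to build, from $U_{0,0}$, any transversal matroid via the allowed operations and then take a minor. Starting from $U_{0,0}$ (the empty matroid), repeatedly adding coloops produces a free matroid $U_{n,n}$ on any ground set; then, for a transversal matroid with presentation $(A_1,\dots,A_m)$ on a set $E$, I would introduce the set $E$ as coloops, then for each $A_i$ freely place a new element $b_i$ in the flat spanned by $A_i$, and finally contract the $b_i$. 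The key check is that freely placing $b_i$ in $\spa(A_i)$ and then contracting all the $b_i$ yields exactly the transversal matroid $M[A_1,\dots,A_m]$ on $E$ — this is a standard ``principal extension = adding a point to a flat'' computation, and matches the known fact that transversal matroids are precisely the matroids obtained from a free matroid by a sequence of principal extensions followed by contractions (principal extensions into flats play the role of the sets $A_i$).

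For the reverse direction, I would show that the class of gammoids is closed under isomorphism, minors, adding coloops, and principal extensions, and contains $U_{0,0}$; since Theorem~\ref{gammoids} asserts gammoids form the \emph{minimal} such class, and a $U_{0,0}$-constructible matroid is by definition built from $U_{0,0}$ using exactly these operations (relabelling, deletion, contraction, adding coloops, principal extensions), every $U_{0,0}$-constructible matroid is then a gammoid. Closure of gammoids under minors and isomorphism is classical (Oxley, the cited pages); adding a coloop to a gammoid is trivially a gammoid (add an isolated source-target vertex, or take a direct sum with $U_{1,1}$, which is a gammoid). The substantive closure property is closure under principal extensions: I need that if $M$ is a gammoid and $F$ is a flat of $M$, then the principal extension $M'$ obtained by freely placing $e$ in $F$ is again a gammoid.

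I expect this last point — \textbf{closure of gammoids under principal extensions} — to be the main obstacle, though it is known. The cleanest route is via the strict gammoid / cotransversal description: a matroid is a strict gammoid iff its dual is transversal, gammoids are the minors of strict gammoids, and transversal matroids are closed under principal \emph{coextensions} in a suitable dual sense; dualizing, strict gammoids are closed under principal extensions into flats, and then one checks the property descends to the minor-closure. Alternatively, one can argue directly with linkage systems: given a graphical presentation of $M$ with target set $T$ and source set $S$, a flat $F$ of $M$ corresponds to a subset $T_F\subseteq T$, and one builds a new digraph by adding a vertex $e$ with arcs to all of $T_F$ (or more precisely to the vertices used in maximal linkings of $F$), verifying that a set spans $e$ in the new matroid iff it spans $F$ in $M$. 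This verification — that the new digraph realizes exactly the principal extension, with no unwanted dependencies created — is the delicate part, and I would either cite Mason's or Ingleton--Piff's treatment of strict gammoids and transversal duality, or carry out the linkage bookkeeping carefully. Once closure under these four operations is in hand, both inclusions follow and the theorem is proved.
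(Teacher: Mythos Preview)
Your high-level strategy agrees with the paper's: reduce the forward implication to transversal matroids via Ingleton--Piff, and for the reverse implication verify that gammoids are closed under the four operations, with principal extensions the substantive case. But the concrete construction you propose for the forward direction is wrong. You claim that starting from the free matroid on $E$, freely placing $b_i$ in $\cl(A_i)$, and then contracting $\{b_1,\dots,b_m\}$ yields $M[A_1,\dots,A_m]$. Take $E=\{1,2,3\}$, $A_1=\{1,2\}$, $A_2=\{2,3\}$, so that $M[A_1,A_2]=U_{2,3}$. In $U_{3,3}$ place $b_1$ on the line through $1,2$ and $b_2$ on the line through $2,3$; then $\{b_1,b_2\}$ is independent, and contracting it leaves a rank-$1$ matroid with $1,2,3$ pairwise parallel, namely $U_{1,3}$, not $U_{2,3}$. (In every example one checks, your procedure is producing the \emph{dual} of the transversal matroid---a sequence of principal truncations of the free matroid---not the transversal matroid itself.) The ``known fact'' you cite has the roles reversed: in Brylawski's description, which the paper invokes, the coloops are indexed by the sets $A_i$ (the simplex vertices), each $e\in E$ is freely placed on the face spanned by $\{i:e\in A_i\}$, and restricting to $E$ then gives $M[A_1,\dots,A_m]$.

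For closure of gammoids under principal extensions, the paper's argument differs from both of your sketches. It again uses Brylawski's affine picture: given $G=T/X$ with $T$ transversal, it performs a series coextension for each element of $E(G)$ not already on a simplex vertex, enlarging $T$ (and $X$) so that all of $E(G)$ sits on vertices; a flat $F\subseteq E(G)$ is then a set of vertices, the principal extension of $T$ into $F$ is visibly transversal (a point freely placed on that face), and contracting $X$ gives the principal extension of $G$. Your duality route and your direct linkage sketch may be workable alternatives, but you have not actually carried either one out; the series-coextension trick is the device the paper uses to make this step go through cleanly.
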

\begin{proof}
Brylawski showed that a {\em transversal matroid} (see Oxley~[\ref{O},~page~45] for a definition)
can be considered as the affine matroid of a collection of points $S$ in $\bR^{r-1}$ which are freely placed on the faces (of possibly varying dimension) of a simplex in $\bR^{r-1}$~[\ref{Brylawski},~Theorem~3.1~and~Corollary~3.1] (see also Oxley~[\ref{O},~Proposition~11.2.26]).
As a immediate consequence, all transversal matroids are $U_{0,0}$-constructible.
Next, Ingleton and Piff showed that the class of gammoids
 is the class of transversal matroids closed under contraction~[\ref{Gammoids}, Theorem 3.5].
Thus all gammoids are $U_{0,0}$-constructible.

Another consequence of the above description is that gammoids are closed under isomorphisms, minors, and adding coloops.
It remains to be shown that the class of gammoids is closed under principal extensions.
For a gammoid $G$ with ground set $E$, say $G$ is the contraction of a transversal matroid $T$ by a set $X\subseteq E(M)$.
Say that $T$ is the affine matroid of points $S$ in $\bR^{r-1}$ which are freely place on the faces of a simplex with vertex set $B\subseteq\bR^{r-1}$.
Suppose we have $e\in E$ that does not lie on a vertex of this simplex, but lies in the affine span of $B_e\subseteq B$.
We coextend $T$, by a turning $e$ into a series pair $\{x,e\}$ to get $T'$. 
By embedding $\bR^{r-1}$ in $\bR^r$, we have that $T'$ is an affine matroid of points that lie in the faces of a simplex with vertex set $B\cup\{e\}$; the points in $S-\{e\}$ lie in the span of $B$ as before, but $x$ lies in the face spanned by $B_e\cup\{e\}$.
 Note $T'\con X\cup\{x\}=T\con X=G$ but now $e$ is on a vertex of the simplex in the affine representation. 
 In this way we may assume that all of $E$ lies on the vertices of a affine representation of $T$. 
 Consider a principal extension $G_F$ into a flat $F\subseteq E$. 
 Note that the principal extension $T_F$ into $F\subseteq E$ is also a transversal matroid as $F$ are the vertices of a face. 
 Thus $T_F\con X=G_F$ is a gammoid, as we wanted to show.
\end{proof}

In a matroid $M$, we say an element $p$ is {\em freer} than an element $q$,
when every subset of $E(M)\setminus\{p,q\}$ that spans $p$ also spans $q$.
A pair of elements $(p,q)$ is {\em incomparable}
when there is a set that spans $p$ but not $q$
and a set that spans $q$ but not $p$.

\begin{lemma}\label{incomp}
	Matroids with no incomparable pair are gammoids. 
\end{lemma}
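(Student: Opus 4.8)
The plan is to invoke Theorem~\ref{gammoids} and prove instead that every matroid $M$ with no incomparable pair is $U_{0,0}$-constructible, by induction on $|E(M)|$, the case $|E(M)|=0$ being trivial. I would begin with two preliminary facts. First, $M\del e$ again has no incomparable pair for each $e\in E(M)$: since $\cl_{M\del e}(X)=\cl_M(X)\setminus\{e\}$ for every $X\subseteq E(M)\setminus\{e\}$, the statement ``$p$ is freer than $q$ in $M$'' --- which quantifies over \emph{all} subsets of $E(M)\setminus\{p,q\}$ --- implies ``$p$ is freer than $q$ in $M\del e$'', and the claim about $M\del e$ follows at once. Second, the relation ``freer than'' is reflexive and transitive, so under the no-incomparable-pair hypothesis it is a total preorder on the finite set $E(M)$; hence $M$ has a \emph{freest} element $e$, one that is freer than every other element.

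The engine of the argument is the observation that if $e$ is a freest element and $X\subseteq E(M)\setminus\{e\}$ spans $e$, then $\cl_M(X)=E(M)$: for every $q\in E(M)\setminus(X\cup\{e\})$ the set $X$ lies in $E(M)\setminus\{e,q\}$ and spans $e$, hence spans $q$ because $e$ is freer than $q$, and together with $X\cup\{e\}\subseteq\cl_M(X)$ this forces $\cl_M(X)=E(M)$. Now I split on whether $e$ is a coloop. If it is, then $M$ is obtained from $M\del e$ by adding a coloop, and we are done by induction (adding a coloop is permitted). If it is not, then $r(M\del e)=r(M)$, so for $X\subseteq E(M\del e)$ the observation shows that $X$ spans $e$ in $M$ if and only if $X$ spans $M\del e$; one then checks that $M$ is precisely the matroid obtained from $M\del e$ by freely placing $e$ (the principal extension into the flat $E(M\del e)$), and induction applies again. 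This closes the induction, and Theorem~\ref{gammoids} yields the conclusion.

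I expect the transitivity of ``freer than'' to be the one step requiring genuine work. Suppose $p$ is freer than $q$, $q$ is freer than $s$, and $X\subseteq E(M)\setminus\{p,s\}$ with $p\in\cl_M(X)$; we want $s\in\cl_M(X)$. If $q\notin X$, this is immediate by applying the two hypotheses to $X$ in turn. If $q\in X$, put $X_0=X\setminus\{q\}$, so $X=X_0\cup\{q\}$ and $p\in\cl_M(X_0\cup\{q\})$: either $p\in\cl_M(X_0)$, and the previous case applies to $X_0$, or the exchange property gives $q\in\cl_M(X_0\cup\{p\})$, whence $s\in\cl_M(X_0\cup\{p\})\subseteq\cl_M(X_0\cup\{q\})=\cl_M(X)$, using that $q$ is freer than $s$ and that $p\in\cl_M(X)$. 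Everything else in the proof is short once a freest element is known to exist.
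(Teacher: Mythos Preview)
Your proof is correct and follows essentially the same approach as the paper's: induct on $|E(M)|$, find a freest element $e$, and observe that $M$ is obtained from $M\del e$ either by adding a coloop or by free extension. The paper's proof is terser and simply asserts that the elements can be linearly ordered by the ``freer than'' relation, whereas you take the extra care to verify transitivity explicitly (which is indeed needed to guarantee a freest element exists); otherwise the arguments are the same.
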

\begin{proof}
         Suppose that $M$ has no incomparable pair. Then there is an
         ordering $(e_1,\ldots,e_n)$ of $E(M)$ such that $e_j$ is freer than $e_i$ whenever
         $1\le i<j\le n$. 
         So either $M$ is the empty matroid and hence $U_{0,0}$-constructible,
         or else $M$ has a freest element $e_n$.
         Now either $e_n$ is a coloop in $M$ or $M$ is obtained by placing $e_n$ freely in $M\del e_n$.
         Note that $M\del e_n$ has no incomparable pair so we may inductively assume that $M\del e_n$ is
         $U_{0,0}$-constructible, and, hence, $M$ is $U_{0,0}$-constructible.
\end{proof}

Our proof of Theorem~\ref{main} is based on a construction that starts with an excluded-minor
contained in $\cM$. However, we cannot take 
an arbitrary excluded minor; we require a ``special" pair of elements that are provided by the lemma 
below.

If $M_1$ and $M_2$ are matroids on a common ground set $E$, then we say that 
$M_2$ is {\em freer} than $M_1$ if $r_{M_2}(X)\ge r_{M_1}(X)$ for each subset $X$ of $E$.
Let $a$ and $b$ be distinct elements of a matroid $M$ and let $M'$ denote the matroid obtained
from $M$ by freely adding a new element $c$ into the flat spanned by $\{a,b\}$.
We denote by $M_{a\rightarrow b}$ the matroid obtained from $M'\del a$ by relabelling $c$ as $a$.
Note that $c$ is freer than $a$ in $M'$ and hence $M_{a\rightarrow b}$ is 
freer than $M$.
\begin{lemma}\label{special-pair}
Let $\cM_1$ and $\cM_2$ be natural classes of matroids. If $\cM_1\subsetneq \cM_2$, then
there is an excluded-minor $L$ for $\cM_1$ in $\cM_2$ with 
a pair $\{p,q\}$ of incomparable elements  such that $L_{p\rightarrow q}$ and
$L_{q\rightarrow p}$ are both contained in $\cM_1$.
\end{lemma}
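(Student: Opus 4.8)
The plan is to take $L$ to be a suitably extremal excluded minor for $\cM_1$ lying in $\cM_2$: first minimal in the size of its ground set, and then maximal in the ``freer than'' order among such; I will then show that \emph{every} incomparable pair of $L$ has the required property.

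Let $\mathcal{X}$ denote the class of excluded minors for $\cM_1$ that lie in $\cM_2$. Since $\cM_1\subsetneq\cM_2$ and both classes are closed under minors, a matroid in $\cM_2\setminus\cM_1$ with as few elements as possible is an excluded minor for $\cM_1$ --- each of its proper minors lies in $\cM_2$ and hence, by minimality, in $\cM_1$ --- so $\mathcal{X}\ne\emptyset$. Let $n$ be the least number of elements of a member of $\mathcal{X}$ and fix an $n$-element ground set $E_0$; since natural classes are closed under isomorphism, $\mathcal{X}$ contains a matroid with ground set $E_0$. Let $L$ be such a matroid that is maximal with respect to ``freer than'' among all members of $\mathcal{X}$ with ground set $E_0$; a maximum exists as there are only finitely many matroids on $E_0$. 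Since $\cM_1$ is natural it contains every gammoid, so $L$, which is not in $\cM_1$, is not a gammoid; by Lemma~\ref{incomp}, $L$ therefore has an incomparable pair $\{p,q\}$.

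I will show that $L_{p\rightarrow q}\in\cM_1$; the same argument with $p$ and $q$ interchanged then gives $L_{q\rightarrow p}\in\cM_1$. First, $L_{p\rightarrow q}$ has ground set $E_0$ and lies in $\cM_2$, since it is obtained from $L\in\cM_2$ by a principal extension, a deletion, and a relabelling, each of which preserves membership in the natural class $\cM_2$. Second, $L_{p\rightarrow q}$ is freer than $L$ (as noted before the lemma), and in fact \emph{strictly} so: choosing a set $A\subseteq E_0\setminus\{p,q\}$ that spans $p$ but not $q$ in $L$ (one exists because $\{p,q\}$ is incomparable), we have $r_L(A\cup\{p\})=r_L(A)$, whereas $A$ fails to span the flat $\cl_L(\{p,q\})$ and hence fails to span the freely placed point, so, as $L_{p\rightarrow q}$ and $L$ restrict to the same matroid on $E_0\setminus\{p\}$, we get $r_{L_{p\rightarrow q}}(A\cup\{p\})=r_L(A)+1$. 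By the maximality of $L$, it follows that $L_{p\rightarrow q}$ is not an excluded minor for $\cM_1$.

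It remains to rule out $L_{p\rightarrow q}\notin\cM_1$. If that were the case, then since $L_{p\rightarrow q}\in\cM_2$ and both classes are minor-closed, $L_{p\rightarrow q}$ would have a proper minor that is an excluded minor for $\cM_1$ and lies in $\cM_2$; this would be a member of $\mathcal{X}$ with fewer than $n$ elements, contradicting the choice of $n$. Hence $L_{p\rightarrow q}\in\cM_1$, and likewise $L_{q\rightarrow p}\in\cM_1$. The one step that needs genuine care is the verification that $L_{p\rightarrow q}$ is \emph{strictly} freer than $L$ --- so that the maximality of $L$ really applies --- while everything else is a routine interplay between the two extremal choices and the closure properties of natural classes.
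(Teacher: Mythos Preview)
Your proof is correct and follows essentially the same approach as the paper: choose an excluded minor $L$ for $\cM_1$ in $\cM_2$ that is first minimal in size and then freest on its ground set, use that natural classes contain all gammoids together with Lemma~\ref{incomp} to find an incomparable pair, and then argue that $L_{p\rightarrow q}$ (being $L$-constructed and freer than $L$) must lie in $\cM_1$ by the extremal choice. Your write-up is in fact more careful than the paper's, since you explicitly verify that $L_{p\rightarrow q}$ is \emph{strictly} freer than $L$ and explicitly separate the two ways the extremality is used (maximal freeness rules out $L_{p\rightarrow q}$ being an excluded minor, minimal size rules out it having a smaller excluded minor as a minor); the paper compresses both of these into a single sentence.
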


\begin{proof}
Since $\cM_1\subsetneq \cM_2$, there is an excluded-minor for 
$\cM_1$ in $\cM_2$.
Among all excluded-minors for $\cM_1$ in $\cM_2$ we choose $L$ satisfying:
\begin{itemize}
\item $|L|$ is minimum, and
\item subject to this, $L$ is {\em freest} with ground set $E(L)$ (that is, there is no
other excluded minor $L'$ with ground set $E(L)$ that is freer than $L$).
\end{itemize}
By Theorem~\ref{gammoids}, $\cM_1$ contains all gammoids and, by Lemma~\ref{incomp},
$L$ has an incomparable pair $\{p,q\}$. Note that $L_{p\rightarrow q}$ and
$L_{q\rightarrow p}$ are both $L$-constructed and hence they are both contained
in $\cM_2$. Moreover,
both $L_{p\rightarrow q}$ and $L_{q\rightarrow p}$ are freer than $L$.
 So, by our choice of $L$, both $L_{p\rightarrow q}$ and
$L_{q\rightarrow p}$ are contained in $\cM_1$, as required.
\end{proof}

\subsection*{Extending into a $3$-separation}

The \emph{local connectivity}
between two sets $S$ and $T$ of a matroid $M$ is
$$\sqcap_M(S,T)=\rk_M(S)+\rk_M(T)-\rk_M(S\cup T).$$
Now, for disjoint sets $X$, $Y$, and $C$ in $M$, we have 
$$ \sqcap_{M\con C}(X,Y) = \sqcap_M(X,Y\cup C) - \sqcap_M(X,C),$$
which can be easily verified by expanding both sides. We will also use the
fact that, if $\sqcap_M(X,Y)=0$ and $e$ is spanned by both $X$ and $Y$, then
$e$ is a loop; this follows since $r_M(\{e\})\le r_M(\cl_M(X)\cap\cl_M(Y))
\le r_M(\cl_M(X)) +r_M(\cl_M(Y)) -r_M(\cl_M(X)\cup\cl_M(Y))
=r_M(X)+r_M(Y)-r_M(X\cup Y) = \sqcap_M(X,Y)=0$.

Let $(S_1,S_2)$ be a $3$-separation in a matroid $M'$ and let $M$ 
be obtained from $M'$ by extending by a non-loop element $e$ 
into the closures of both $S_1$ and $S_2$. Unlike the case with $2$-separations,
this does not uniquely determine $M$. However, under 
some additional hypotheses, the following result shows that we can uniquely determine $M$.
\begin{lemma}\label{unique}
Let $e$ be a non-loop element of a matroid $M$, let $(S_1,S_2)$ be a $3$-separation
of $M\del e$, and let $Y_1\subseteq S_1$ and $Y_2\subseteq S_2$ such that
$\sqcap_M(Y_1,S_2) = 1$, $\sqcap_M(S_1,Y_2)=1$, and $e$ is spanned by both
$Y_1$ and $Y_2$ in $M$. Then a flat $F$ of $M$ spans $e$ if and only if either
	\begin{enumerate}[(i)]
		\item $\sqcap_M(F\cap S_1,Y_2)=1$ or $\sqcap_M(Y_1,F\cap S_2)=1$, or
		\item $\sqcap_M(F\cap S_1,S_2)=\sqcap_M(S_1,F\cap S_2)=1$ and $\sqcap_M(F\cap S_1,F\cap S_2)=0$.
	\end{enumerate}
\end{lemma}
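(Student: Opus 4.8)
The plan is to show that the flats $F$ of $M$ containing $e$ are exactly those satisfying (i) or (ii), by exploiting the fact that $e$ is freely placed into $\cl_M(Y_1)\cap\cl_M(Y_2)$ and using the connectivity bookkeeping set up just before the statement. First I would record the structural consequences of the hypotheses. Since $\sqcap_M(Y_1,S_2)=1$ and $e$ is spanned by $Y_1$, deleting $e$ gives $\sqcap_{M\del e}(Y_1,S_2)\le 1$; combined with $(S_1,S_2)$ being a $3$-separation and $e\in\cl(Y_1)\subseteq\cl(S_1)$, the quantity $\sqcap_M(S_1,S_2)$ equals $2$, and the ``extra'' unit of local connectivity across the separation after adding $e$ is carried precisely by $e$. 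Symmetrically for $Y_2$. I would also note that $e$ is freely placed in the line (rank-$2$ flat, by the loop lemma applied to $\sqcap_M(Y_1,Y_2)$—actually $\sqcap_M(Y_1,Y_2)$ need not be computed directly, but $\cl_M(Y_1)\cap\cl_M(Y_2)$ has rank $\le 1+1+\ldots$; the point is $e$ lies in $\cl(Y_1)\cap\cl(Y_2)$, which one checks has rank exactly $1$ so equals $\cl(\{e\})$ once $e$ is a non-loop).

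Next I would prove the ``if'' direction. Suppose (i) holds, say $\sqcap_M(F\cap S_1,Y_2)=1$. Since $\sqcap_M(S_1,Y_2)=1$ as well and $F\cap S_1\subseteq S_1$, the set $\cl(F\cap S_1)\cap\cl(Y_2)$ already achieves the full intersection $\cl(S_1)\cap\cl(Y_2)$, which contains $e$ because $e\in\cl(Y_1)\subseteq\cl(S_1)$ and $e\in\cl(Y_2)$; hence $e\in\cl(F\cap S_1)\subseteq\cl(F)=F$. The symmetric subcase is identical. Suppose (ii) holds. Then $\cl(F\cap S_1)$ meets $\cl(S_2)$ in the same rank-$1$ flat that $\cl(S_1)$ does (since $\sqcap_M(F\cap S_1,S_2)=1=\sqcap_M(S_1,S_2)-1$ picks out the ``$e$-part'' of the guts), and likewise $\cl(F\cap S_2)$ meets $\cl(S_1)$ there; combined with $\sqcap_M(F\cap S_1,F\cap S_2)=0$ and the loop lemma, any element in both closures is forced, and tracing through shows this common point is $e$, so again $e\in F$.

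For the ``only if'' direction—which I expect to be the main obstacle—I would take a flat $F\ni e$ and split into cases according to whether $\sqcap_M(F\cap S_1,Y_2)$ and $\sqcap_M(Y_1,F\cap S_2)$ are $0$ or $1$ (they cannot exceed $1$ by monotonicity from the hypotheses). If either is $1$ we are in case (i) and done. So assume both are $0$; I must derive (ii). The key tool is the contraction identity $\sqcap_{M\con C}(X,Y)=\sqcap_M(X,Y\cup C)-\sqcap_M(X,C)$ together with the fact that $e$ is freely placed: because $e\in F$ and $e$ is free in $\cl(Y_1)\cap\cl(Y_2)$, any flat containing $e$ must ``see'' $e$ through one side or through the guts, and the vanishing of both quantities in (i) forces the guts to be fully present on each side, i.e. $\sqcap_M(F\cap S_1,S_2)=1$ and $\sqcap_M(S_1,F\cap S_2)=1$; finally $\sqcap_M(F\cap S_1,F\cap S_2)=0$ follows because if it were $1$ then, by the loop lemma applied after contracting a suitable set, $e$ would be forced to lie in $\cl(F\cap S_1)\cap\cl(F\cap S_2)$ via a rank-$1$ flat that would make $\sqcap_M(Y_1,F\cap S_2)=1$, contradicting our case assumption. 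The delicate part is managing these rank equalities so that the unit of connectivity being tracked is always the one carried by $e$ and not some unrelated coincidence of the $3$-separation; I would handle this by consistently intersecting closures and invoking the displayed loop lemma ($\sqcap=0$ with a common spanned element forces a loop) to pin down that the relevant rank-$1$ flats all equal $\cl_M(\{e\})$.
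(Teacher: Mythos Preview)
Your proposal has the right overall shape, but the ``if'' direction contains a genuine gap that the paper's proof avoids by a different mechanism.

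In case~(i) you argue: since $\sqcap_M(F\cap S_1,Y_2)=1=\sqcap_M(S_1,Y_2)$, the set $\cl(F\cap S_1)\cap\cl(Y_2)$ ``already achieves the full intersection $\cl(S_1)\cap\cl(Y_2)$'', which contains $e$. This inference is not valid. In a matroid one only has $r(\cl(X)\cap\cl(Y))\le\sqcap(X,Y)$, not equality, so equal local connectivities do \emph{not} force equal closure intersections; it is entirely possible that $\sqcap_M(F_1,Y_2)=1$ while $\cl(F_1)\cap\cl(Y_2)$ consists only of loops and misses $e$. The paper sidesteps this by contracting rather than intersecting closures: it contracts $F_1$, uses the identity $\sqcap_{M/F_1}(S_1-F_1,Y_2)=\sqcap_M(S_1,Y_2)-\sqcap_M(F_1,Y_2)=0$, and then observes that $e$ lies in the closure of both $S_1-F_1$ and $Y_2$ in $M/F_1$, so by the loop lemma $e$ is a loop there, i.e.\ $e\in\cl_M(F_1)$. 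For case~(ii) the paper does the analogous thing: contract $F=F_1\cup F_2$ and compute, via two applications of the contraction identity, that $\sqcap_{M/F}(S_1-F_1,S_2-F_2)=\sqcap_M(S_1,S_2)-\sqcap_M(F_1,S_2)-\sqcap_M(S_1,F_2)+\sqcap_M(F_1,F_2)=2-1-1+0=0$, and again $e$ is spanned by both sides in $M/F$, forcing $e$ to be a loop there. Your sketch for (ii) (``picks out the $e$-part of the guts'', ``tracing through'') does not supply this calculation.

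For the ``only if'' direction you anticipate the main difficulty, but in fact the paper's argument here is short. One assumes $e\notin\cl(F_1)$ and $e\notin\cl(F_2)$ (equivalent to your assumption that both quantities in (i) vanish, by the case~(i) argument just described). Then $e$ is a non-loop in $M/F_1$ spanned by both $F_2$ and $S_1-F_1$; this forces $\sqcap_{M/F_1}(S_1-F_1,S_2)\ge 1$, hence $\sqcap_M(F_1,S_2)\le 1$, and the symmetric statement. Finally $e\in\cl_{M/F_1}(F_2)\cap\cl_{M/F_1}(S_1-F_1)$ with $e$ a non-loop gives $1\le\sqcap_{M/F_1}(S_1-F_1,F_2)=\sqcap_M(S_1,F_2)-\sqcap_M(F_1,F_2)$, so $\sqcap_M(F_1,F_2)=0$. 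There is no need for the ``freely placed'' language you introduce (which is neither assumed nor true in general), nor for tracking a distinguished rank-$1$ flat $\cl(\{e\})$ through closure intersections.
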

\begin{proof}
Let $F_1=F\cap S_1$ and $F_2=F\cap S_2$.
First, suppose that $\sqcap_M(F_1,Y_2)=1$.
Then $\sqcap_{M\con F_1}(S_1-F_1,Y_2) =\sqcap_M(S_1,Y_2) - \sqcap_M(F_1,Y_2)=0$.
However, $e$ is in the closure of both $S_1-F_1$ and $Y_2$ in $M\con F_1$. Thus $e$ is a loop 
in $M\con F_1$ and hence $e$ is spanned by $F$.
By symmetry, if $\sqcap_M(Y_1,F_2)=1$, then $e$ is spanned by $F$.

Now suppose that $\sqcap_M(F_1,S_2)=\sqcap_M(S_1,F_2)=1$ and $\sqcap_M(F_1,F_2)=0$.
Then
\begin{eqnarray*}
\sqcap_{M\con F}(S_1-F_1,S_2-F_2) &=&
\sqcap_{M\con F_1}(S_1-F_1,S_2) - \sqcap_{M\con F_1}(S_1-F_1,F_2)\\
&=& \sqcap_M(S_1,S_2) -\sqcap_M(F_1,S_2)\\ &&-\sqcap_M(S_1,F_2) + \sqcap_M(F_1,F_2)\\
&=& 0.
\end{eqnarray*}
However, $e$ is spanned by both $S_1-F_1$ and $S_2-F_2$ in $M\con F$.
Thus $e$ is a loop in $M\con F$ and, hence, $F$ spans $e$.

Conversely, suppose that $F$ spans $e$ and hence that $e$ is a loop in $M\con F$.
We may assume that $e$ is not spanned by either $F_1$ or $F_2$
since otherwise $(i)$ holds. Since $e$ is spanned by $F_2$ in $M\con F_1$,
we have $\sqcap_M(F_1,S_2)=1$. Similarly $\sqcap_M(S_1,F_2)=1$.
Moreover, again since $e$ is spanned by $F_2$ in $M\con F_1$, we have
$1=\sqcap_{M\con F_1}(S_1-F_1,F_2) = \sqcap_{M}(S_1,F_2) - \sqcap_M(F_1,F_2) = 1- \sqcap_M(F_1,F_2)$
and, hence $\sqcap_M(F_1,F_2)=0$, so $(ii)$ holds.
\end{proof}

\section{The Construction}\label{CONSTRUCTION}

In the construction and in all subsequent results in this section,
\begin{itemize}
\item $L$ and $N$ are matroids with disjoint ground sets,
\item $(A,B)$ is a partition of $E(N)$ into two bases, and
\item $p$ and $q$ are distinct elements of $L$.
\end{itemize}
We build an $(N\oplus L)$-constructed matroid 
$M(L,p,q;N, A,B)$ as follows.
The input and output of the construction process are depicted in Figure~\ref{Fconstruction}.

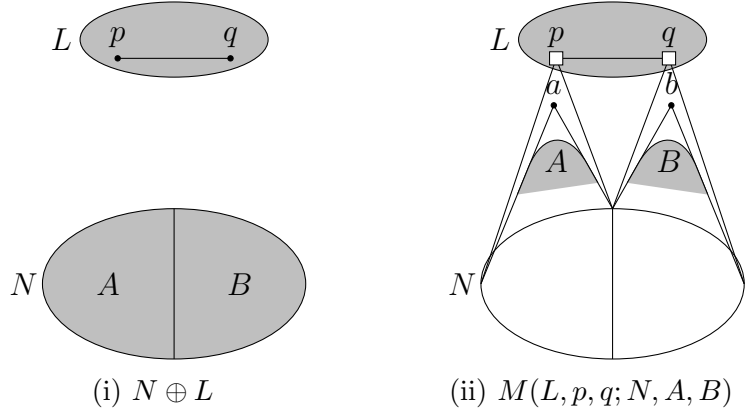
\begin{figure}
    \centering
	\begin{subfigure}[h]{0.45\textwidth}
	    \centering
		\begin{tikzpicture}[scale=0.5]
			\def\S{2}
			\coordinate [label=above:$p$] (P) at (3,8);
			\coordinate [label=above:$q$] (Q) at (6,8);
			\coordinate (N_1) at (1,2);
			\coordinate (N_2) at (8,2);
			\coordinate (N_3) at ($ 0.5*(N_1)+0.5*(N_2)+(0,\S) $);
			\filldraw[fill=lightgray] ($ 0.5*(P)+0.5*(Q)+(0,.5) $) circle [x radius=2.5, y radius=1];
			\coordinate [label=above:$p$] (p) at (P);
			\coordinate [label=above:$q$] (q) at (Q);
			\filldraw[fill=lightgray] ($ 0.5*(N_1)+0.5*(N_2) $) circle [x radius=3.5, y radius=\S];
			\draw ($ 0.5*(N_1)+0.5*(N_2)+(0,\S) $) -- ($ 0.5*(N_1)+0.5*(N_2)-(0,\S) $);
			\node (L) at ($ (p)+(-1.5,0.5) $){$L$};
			\node (N) at ($ (N_1)-(.5,0) $){$N$};

			\filldraw (p) circle (2pt);
			\filldraw (q) circle (2pt);

			\draw (p) -- (q);
			\node (A) at ($ 0.75*(N_1)+0.25*(N_2) $){$A$};
			\node (B) at ($ 0.25*(N_1)+0.75*(N_2) $){$B$};
		\end{tikzpicture}
		\caption{$N\oplus L$}
	\end{subfigure}
	\begin{subfigure}[h]{0.45\textwidth}
	    \centering
		\begin{tikzpicture}[scale=0.5]
			\def\S{2}
			\coordinate [label=above:$p$] (P) at (3,8);
			\coordinate [label=above:$q$] (Q) at (6,8);
			\coordinate (N_1) at (1,2);
			\coordinate (N_2) at (8,2);
			\coordinate (N_3) at ($ 0.5*(N_1)+0.5*(N_2)+(0,\S) $);
			\filldraw[fill=lightgray] ($ 0.5*(P)+0.5*(Q)+(0,.5) $) circle [x radius=2.5, y radius=1];
			\coordinate [label=above:$p$] (p) at (P);
			\coordinate [label=above:$q$] (q) at (Q);
			\draw ($ 0.5*(N_1)+0.5*(N_2) $) circle [x radius=3.5, y radius=\S];
			\draw ($ 0.5*(N_1)+0.5*(N_2)+(0,\S) $) -- ($ 0.5*(N_1)+0.5*(N_2)-(0,\S) $);
			\node (L) at ($ (p)+(-1.5,0.5) $){$L$};
			\node (N) at ($ (N_1)-(.5,0) $){$N$};
			\coordinate [label=above:$a$] (a) at ($ 0.125*(N_1)+0.75*(p)+0.125*(N_3) $);
			\coordinate [label=above:$b$] (b) at ($ 0.125*(N_3)+0.75*(q)+0.125*(N_2) $);

			\filldraw (a) circle (2pt);
			\filldraw (b) circle (2pt);
			\draw (p) -- (q);

			\draw (N_1) -- (p) -- (N_3) -- (q) -- (N_2);
			\draw (N_1) -- (a) -- (N_3) -- (b) -- (N_2);
			\filldraw[fill=lightgray,rounded corners=25pt] ($ 0.5*(N_1)+ 0.5*(a) $) -- (a) -- ($ 0.75*(N_3)+ 0.25*(a) $);
			\filldraw[fill=lightgray,rounded corners=25pt] ($ 0.75*(N_3)+ 0.25*(b) $) -- (b) -- ($ 0.5*(N_2)+ 0.5*(b) $);
			\node (A) at ($ 0.58*(a)+0.17*(N_1)+0.25*(N_3) $){$A$};
			\node (B) at ($ 0.58*(b)+0.17*(N_2)+0.25*(N_3) $){$B$};

			\filldraw[fill=white] ($ (p)+(-5pt,-5pt) $) -- ($ (p)+(-5pt,5pt) $) -- ($ (p)+(5pt,5pt) $) -- ($ (p)+(5pt,-5pt) $) -- cycle;
			\filldraw[fill=white] ($ (q)+(-5pt,-5pt) $) -- ($ (q)+(-5pt,5pt) $) -- ($ (q)+(5pt,5pt) $) -- ($ (q)+(5pt,-5pt) $) -- cycle;
		\end{tikzpicture}
		\caption{$M(L,p,q;N,A,B)$}
	\end{subfigure}
	\caption{The input and output of the construction process.}\label{Fconstruction}
\end{figure}

We first build a matroid $M_1(L,p,q;N,A,B)$ from
$N\oplus L$ by freely placing elements $a$ and $b$ 
in the flats spanned by $E(N)\cup \{p\}$ and $E(N)\cup \{q\}$ respectively; then,
for each $x\in A$, we freely place an element $x_{a}$ in $\{x,a\}$,
and, for each $y\in B$, we freely place an element $y_{b}$ in $\{y,b\}$.

We then obtain $M_2(L,p,q;N, A,B)$ from $M_1(L,p,q;N,A,B)$ as follows:
for each $x\in A$ and $y\in B$, we delete
$x$ and $y$ and relabel $x_{a}$ and $y_{b}$ as $x$ and $y$ respectively.
Finally, we let $M(L,p,q;N, A,B) = M_2(L,p,q;N, A,B)\del \{p,q\}$.
Note that if $\{p,q\}$ is an independent pair in $L$,
then $M(L,p,q;N,A,B)$ contains $N$ as the minor
$M(L,p,q;N,A,B)\con\{a,b\}\del E(L)$.

The main result in this section is the following.
\begin{theorem}\label{construction}
Let $\bF$ be an infinite field. If
\begin{itemize}
\item[(i)] $L$ is an excluded-minor for the class of $\bF$-representable matroids,
\item[(ii)] $p$ and $q$ are an incomparable pair of elements in $L$ such that $L_{p\rightarrow q}$ and 
$L_{q\rightarrow p}$ are both $\bF$-representable, 
\item[(iii)] $N$ is an $\bF$-representable matroid, and
\item[(iv)] $(A,B)$ is a partition of $E(N)$ into bases,
\end{itemize}
then $M(L,p,q;N,A,B)$ is an excluded-minor for the class of $\bF$-representable matroids.
\end{theorem}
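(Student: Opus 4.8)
The plan is to verify the two defining properties of an excluded minor for $\bF$-representability: that $M:=M(L,p,q;N,A,B)$ is not $\bF$-representable, and that every proper minor of $M$ is $\bF$-representable. For the second property it suffices to treat the single-element minors $M\del e$ and $M\con e$, and I would split the elements $e$ into those in the \emph{$L$-part} $E(L)\setminus\{p,q\}$, the two \emph{link} elements $a$ and $b$, and the elements of the two \emph{gadgets} $\{x_a:x\in A\}$ and $\{y_b:y\in B\}$.

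\emph{The $L$-part.} Since $L$ is an excluded minor it is simple (a loop, or one member of a parallel pair, could be deleted from $L$ without changing its $\bF$-representability, contradicting the minimality of $L$). Hence for $e\in E(L)\setminus\{p,q\}$ the element $e$ lies in none of the flats that occur in the construction, and one checks that deleting or contracting $e$ commutes with every step of the construction; thus $M\del e=M(L\del e,p,q;N,A,B)$ and $M\con e=M(L\con e,p,q;N,A,B)$, which are respectively $(N\oplus(L\del e))$- and $(N\oplus(L\con e))$-constructed. As $L\del e$ and $L\con e$ are proper minors of the excluded minor $L$ they are $\bF$-representable, and since $\bF$ is infinite, $\bF$-representability is preserved by all the construction operations, so $M\del e$ and $M\con e$ are $\bF$-representable. \emph{Links and gadgets.} The construction is symmetric under the interchange $(p,a,A)\leftrightarrow(q,b,B)$, which preserves hypotheses (i)--(iv), so it is enough to treat $M\del a$, $M\con a$, $M\del x_a^0$ and $M\con x_a^0$ for a fixed $x^0\in A$. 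In each case the operation breaks the coupling between the copies of $N$ and of $L$: deleting $a$ frees the $a$-gadget into the flat spanned by $E(N)\cup\{p\}$ and, after the prescribed deletions of $A,B,p,q$, leaves a construction over the direct sum of $L\del\{p,q\}$ with a free matroid; contracting $a$ is an elementary quotient at that flat, pulling $p$ into the closure of $E(N)$, and after the deletions leaves a construction over an amalgam of $N$ with $L\del q$; the two gadget cases reduce similarly to constructions over amalgams of $N$ with a proper minor of $L$ or with $L_{p\to q}$. The matroids $N$, $L\del p$, $L\del q$, $L\con p$, $L\con q$, $L_{p\to q}$, $L_{q\to p}$ are all $\bF$-representable---$N$ by (iii), the next four because $L$ is an excluded minor, the last two by (ii)---and, $\bF$ being infinite, the amalgams that occur can be realized over $\bF$ by gluing representations along a common flat; hence every proper minor of $M$ is $\bF$-representable. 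Lemma~\ref{series} is useful for recognizing the series extensions that appear, and Lemma~\ref{unique} for pinning down the single-element minors in the subtle cases.

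For non-representability, let $M_2=M_2(L,p,q;N,A,B)$, obtained from $M$ by re-adding $p$ and $q$. By construction $L=M_2\del(E(M_2)\setminus E(L))$, so $M_2$ is not $\bF$-representable, and it therefore suffices to show that every $\bF$-representation of $M$ extends to one of $M_2$. Take $S_1=E(L)\setminus\{p,q\}$ and $S_2=\{a,b\}\cup\{x_a:x\in A\}\cup\{y_b:y\in B\}$; one computes $\sqcap_M(S_1,S_2)=2$, so $(S_1,S_2)$ is a $3$-separation of $M$. Since $\{p,q\}$ is incomparable in $L$ there is a set $Y_1\subseteq E(L)\setminus\{p,q\}$ spanning $p$ but not $q$ in $L$ (in particular $p\in\cl_L(E(L)\setminus\{p,q\})$); put $Y_2=\{a\}\cup\{x_a:x\in A\}$. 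In $M_2\del q$ the element $p$ is a non-loop spanned by both $Y_1$ and $Y_2$, and one verifies, using that $L$ is simple, that $\sqcap_M(Y_1,S_2)=\sqcap_M(S_1,Y_2)=1$. Lemma~\ref{unique} then describes exactly which flats of $M_2\del q$ span $p$, in terms of local connectivities in $M$ alone; consequently any matroid obtained from $M$ by adding a non-loop element $p$ spanned by both $Y_1$ and $Y_2$ must equal $M_2\del q$. Now $\sqcap_M(Y_1,Y_2)\ge1$ (this local connectivity is unchanged by the deletion of $p$, and $p$ is a non-loop spanned by both $Y_1$ and $Y_2$ in $M_2\del q$), so given any $\bF$-representation of $M$ one may place $p$ at a nonzero vector in the intersection of the spans of $Y_1$ and $Y_2$; by the previous sentence the result represents $M_2\del q$. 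Running the same argument for $q$ (with the two sides interchanged) extends any $\bF$-representation of $M$ to one of $M_2$, a contradiction.

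The hardest part, and the main obstacle, is the case analysis for the links and gadgets: one must compute each of the eight relevant single-element minors and recognize it as a construction over an $\bF$-representable matroid, the delicate point being that the amalgams of $N$ with $L$ that arise genuinely are $\bF$-representable \emph{even though $L$ itself is not}---this works only because the relevant element of $\{p,q\}$ has been deleted, so that a gluing of $N$ along $p$ degenerates into a free amalgam already supported by $N$ and the $\bF$-representable matroid $L\del p$. A secondary obstacle is the local-connectivity bookkeeping required for the applications of Lemma~\ref{unique}.
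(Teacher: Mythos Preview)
Your argument for non-representability is essentially the paper's (Lemmas~\ref{separation}, \ref{retrieval}, \ref{CONii}), and your treatment of $M\del e$ and $M\con e$ for $e\in E(L)\setminus\{p,q\}$ is correct and agrees with the paper.

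The gap is in the case $e\in A\cup B\cup\{a,b\}$. Your structural claims there are not right. For instance, $M\del a$ is \emph{not} a construction over a direct sum of $L\del\{p,q\}$ with a free matroid: the element $b$ still links $B\cup\{b\}$ to $E(L)\setminus\{p,q\}$ with local connectivity~$1$, so the $L$-side remains coupled to the $N$-side and the full restriction $L|(E(L)\setminus\{p,q\})$ is still visible. Your description of $M\con a$ as an ``amalgam of $N$ with $L\del q$'' is likewise unproven; one would have to show that the particular amalgam arising here is $\bF$-representable, and that is exactly the point at issue. The phrase ``reduce similarly'' for the gadget elements is not a proof, and you never actually use the hypothesis that $L_{q\to p}$ is $\bF$-representable.

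The paper's device, which your sketch is missing, is a comparison lemma (Lemma~\ref{difference}, resting on Lemma~\ref{pnotq}): the matroids $M(L,p,q;N,A,B)$ and $M(L_{p\to q},p,q;N,A,B)$ assign the same rank to every set $X$ \emph{except} those with $X\cap(A\cup B\cup\{a,b\})=A\cup\{a\}$. Consequently, for every $e\in A\cup\{a\}$ one has $M\del e=M(L_{p\to q},p,q;N,A,B)\del e$, and for every $f\in B\cup\{b\}$ one has $M\con f=M(L_{p\to q},p,q;N,A,B)\con f$; these are $(N\oplus L_{p\to q})$-constructed and hence $\bF$-representable. The symmetry $M(L,p,q;N,A,B)=M(L,q,p;N,B,A)$ then handles $M\con e$ for $e\in A\cup\{a\}$ and $M\del f$ for $f\in B\cup\{b\}$, this time using $L_{q\to p}$. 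Without this comparison lemma the eight single-element minors on the $N$-side cannot be disposed of by the ad hoc descriptions you propose.
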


We will start by proving that $M(L,p,q;N,A,B)$ is not $\bF$-representable. For this we require the following results.
The first of these results gives us some simple structural properties of $M(L,p,q;N,A,B)$.
\begin{lemma}\label{separation}
Let $M=M(L,p,q;N,A,B)$.
If $\{p,q\}$ is independent and coindependent in $L$, then $(E(L)-\{p,q\}, E(N)\cup\{a,b\})$ is a $3$-separation in $M$ and
$\sqcap_{M}(A\cup\{a\},E(L)-\{p,q\})=\sqcap_{M}(B\cup\{b\},E(L)-\{p,q\})=1$.
\end{lemma}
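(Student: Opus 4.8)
The plan is to track the rank function of $M=M(L,p,q;N,A,B)$ through the three construction stages and then extract the two stated facts. Write $E_L = E(L)-\{p,q\}$ and recall that the construction freely adds $a$ into $\operatorname{cl}(E(N)\cup\{p\})$ and $b$ into $\operatorname{cl}(E(N)\cup\{q\})$, then freely places $x_a$ in $\{x,a\}$ for $x\in A$ and $y_b$ in $\{y,b\}$ for $y\in B$, then deletes the old $A\cup B$ and $\{p,q\}$ and relabels. The key elementary observations are: (1) in $N\oplus L$ the flats $E(N)\cup\{p\}$ and $E(N)\cup\{q\}$ have ranks $r(N)+1$ each (since $p,q$ are non-loops of $L$ — which holds because $\{p,q\}$ is independent in $L$), so $a$ and $b$ each raise the rank by $1$ when added freely; (2) $a$ is spanned by $A\cup\{p\}$ (as $A$ is a basis of $N$), and after the relabelling the new element "$x$" ($=x_a$) lies freely on line $\{x_{\mathrm{old}},a\}$, so $A_{\mathrm{new}}\cup\{a\}$ spans $a$, $A_{\mathrm{old}}$, and $\{p\}$-worth of rank, etc.

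First I would establish the $3$-separation. After the construction, $M$ has ground set $E_L\cup E(N)\cup\{a,b\}$ (with $E(N)$ now consisting of the relabelled $x_a,y_b$). I would show $r_M(E_L)=r_L(E_L)=r(L)-2$ using coindependence of $\{p,q\}$ (so deleting $p,q$ from $L$ drops the rank by exactly $2$, and none of the free additions/deletions outside $E_L$ affect the restriction to $E_L$). Next, $r_M(E(N)\cup\{a,b\})$: in $M_1$ the set $E(N)_{\mathrm{old}}\cup\{a,b\}$ has rank $r(N)+2$ (adding $a,b$ each bumped rank by one over $E(N)$), and the free line-points $x_a,y_b$ lie in this flat, while $a\in\operatorname{cl}(A_{\mathrm{new}}\cup\{p\})$ and $b\in\operatorname{cl}(B_{\mathrm{new}}\cup\{q\})$ keep everything controlled; the upshot is $r_M(E(N)\cup\{a,b\}) = r(N)+2$. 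Finally $r_M(E(M)) = r(L)+r(N)$: the whole thing is an $(N\oplus L)$-construction with all additions free, and one checks the rank only went up by the two independent free elements $a,b$ over $r(N\oplus L)=r(N)+r(L)$, while deleting $p,q$ (coindependent) did not lower it. Then
\[
\sqcap_M(E_L, E(N)\cup\{a,b\}) = (r(L)-2) + (r(N)+2) - (r(L)+r(N)) = 0,
\]
wait — that gives $0$, so I must be more careful: the correct bookkeeping is that $a$ is \emph{not} independent from $E_L$ because $a\in\operatorname{cl}(E(N)\cup\{p\})$ and $p\in\operatorname{cl}(E_L\cup\{q\})$-type dependencies force $\sqcap=2$; I would recompute $r_M(E(N)\cup\{a,b\})$ honestly as $r(N)+2$ but recompute $r_M(E(M))$ as $r(N)+r(L)$ and note the join $E_L\cup E(N)\cup\{a,b\}=E(M)$, so $\sqcap_M(E_L,E(N)\cup\{a,b\}) = r(L)-2+r(N)+2-(r(N)+r(L)) $. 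The resolution of this apparent contradiction is exactly the subtlety: in fact $r_M(E_L)$ or $r_M(E(M))$ is one or two larger than the naive guess because $a$ genuinely depends on $E_L$ (through $p$) — so I would carefully prove $\sqcap_M(E_L,E(N)\cup\{a,b\})=2$, giving a $3$-separation $(E_L,\,E(N)\cup\{a,b\})$, i.e. $\lambda_M = \sqcap + \text{(loops adjustments)} = 2$, which is the definition of a $3$-separation for a set of size $\ge 3$ on each side (both sides have size $\ge 3$ provided $L$ and $N$ are nontrivial, else trivially handled).

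For the two local-connectivity equalities, I would work with $A\cup\{a\}$ versus $E_L$. Here $a$ lies in $\operatorname{cl}(A_{\mathrm{new}}\cup\{p\})$ (since $A$ spans $N$, and each $x_a$ spans the old $x$ together with $a$, so $A_{\mathrm{new}}$ spans $E(N)_{\mathrm{old}}$, which together with... actually $A_{\mathrm{new}}\cup\{a\}$ spans $E(N)_{\mathrm{old}}\cup\{a\}$, and $a\in\operatorname{cl}(E(N)_{\mathrm{old}}\cup\{p\})$, so $p\in\operatorname{cl}(A_{\mathrm{new}}\cup\{a\}\cup\{\text{nothing}\})$... ). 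The clean statement to prove is $r_M(A\cup\{a\}) = r(N)+1$ (it spans $N$ and one extra dimension coming from $a$/$p$), $r_M(E_L)=r(L)-2$, and $r_M((A\cup\{a\})\cup E_L) = r(L)-2 + r(N)$, because adjoining $E_L$ to $A\cup\{a\}$ recovers $p$ (via $a$ and the $A$-side) hence all of $L\setminus\{q\}$ plus the free extra, but loses nothing — the single overlap being the one dimension "$p$". That yields $\sqcap_M(A\cup\{a\},E_L) = r(N)+1 + r(L)-2 - (r(N)+r(L)-2) = 1$, and symmetrically with $B\cup\{b\}$ and $q$.

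The main obstacle I anticipate is the careful rank bookkeeping across the relabelling step: one must verify that freely placing $x_a$ on the line $\{x,a\}$ and then deleting $x$ does not change the span of $E(N)\cup\{a,b\}$ nor of $A\cup\{a\}$ (it does not, since $x\in\operatorname{cl}(\{x_a,a\})$, so $x$ is recoverable from $x_a$ and $a$) and, crucially, that it does not accidentally make $a$ or $b$ a loop or create unexpected dependencies among $E_L$. I would isolate this as a short preliminary computation: in $M_2$ (before deleting $p,q$) one has $\operatorname{cl}_{M_2}(E(N)_{\mathrm{new}}\cup\{a\}) = \operatorname{cl}_{M_2}(E(N)_{\mathrm{old}}\cup\{a\})=\operatorname{cl}_{M_1}(E(N)_{\mathrm{old}}\cup\{a\})$, and likewise $\operatorname{cl}_{M_2}(A_{\mathrm{new}}\cup\{a\}) = \operatorname{cl}_{M_2}(A_{\mathrm{old}}\cup\{a\}) \ni p$. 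Once these closure identities are in hand, all three/four rank values drop out by additivity of rank on direct sums plus "free element raises rank by one unless already spanned," and the two $\sqcap$ computations and the $3$-separation follow by direct substitution as above.
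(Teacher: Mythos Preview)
Your proposal contains a genuine error in the interpretation of ``coindependent.'' You write that coindependence of $\{p,q\}$ in $L$ means ``deleting $p,q$ from $L$ drops the rank by exactly $2$,'' and hence $r_M(E_L)=r(L)-2$. This is backwards: $\{p,q\}$ coindependent means $\{p,q\}$ is independent in $L^*$, equivalently $E(L)-\{p,q\}$ is spanning in $L$, so $r_L(E_L)=r(L)$ and therefore $r_M(E_L)=r(L)$. With the correct value, your first computation gives
\[
\sqcap_M(E_L,\,E(N)\cup\{a,b\}) \;=\; r(L) + (r(N)+2) - (r(N)+r(L)) \;=\; 2,
\]
exactly as desired, and there is no ``apparent contradiction'' to resolve. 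Your attempted resolution (``$a$ genuinely depends on $E_L$ through $p$'') misidentifies the source of the discrepancy. The same slip appears in your second computation, where you write $r_M((A\cup\{a\})\cup E_L)=r(L)-2+r(N)$; the correct value is $r(L)+r(N)$ (this set is spanning in $M$), and you only reach $\sqcap=1$ because the error of $-2$ occurs twice and cancels.

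Once this is fixed, your direct rank-bookkeeping approach does go through, but it is noticeably heavier than the paper's argument. The paper works instead in $M_2=M_2(L,p,q;N,A,B)$, where $p$ and $q$ are still present: there one sees immediately that $\sqcap_{M_2}(E(L),\,E(N)\cup\{a,b\})=2$ and $\sqcap_{M_2}(A\cup\{a\},E(L))=\sqcap_{M_2}(B\cup\{b\},E(L))=1$, because $p$ and $q$ sit visibly in the guts of these separations. Then one passes to $M=M_2\del\{p,q\}$ using coindependence of $\{p,q\}$ (which guarantees that deleting $p,q$ preserves these local connectivities, since $E_L$ still spans $p$ and $q$). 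This sidesteps all of the relabelling bookkeeping you flagged as the ``main obstacle.''
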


\begin{proof}
Let $M=M(L,p,q;N,A,B)$ and $M_2=M_2(L,p,q;N,A,B)$. 
Note that $\sqcap_{M_2}(E(L), E(N)\cup\{a,b\})=2$ and
$\sqcap_{M_2}(A\cup\{a\},E(L))=\sqcap_{M_2}(B\cup\{b\},E(L))=1$.
Then, since $\{p,q\}$ is coindependent in $L$, we have that
$(E(L)-\{p,q\}, E(N)\cup\{a,b\})$ is a $3$-separation in $M$ and
$\sqcap_{M}(A\cup\{a\},E(L)-\{p,q\})=\sqcap_{M}(B\cup\{b\},E(L)-\{p,q\})=1$.
\end{proof}

The following result shows that, if we extend $M(L,p,q;N,A,B)$ by nonloop elements $p$ and $q$ such that
$p$ is spanned by both $A\cup\{a\}$ and $E(L)-\{p,q\}$ whereas
$q$ is spanned by both $B\cup\{b\}$ and $E(L)-\{p,q\}$, then we retrieve $M_2(L,p,q;N,A,B)$.
\begin{lemma}\label{retrieval}
Let $M'$ be an extension of $M(L,p,q;N,A,B)$ by nonloop elements $p$ and $q$ such that
$p$ is spanned by both $A\cup\{a\}$ and $E(L)-\{p,q\}$ whereas
$q$ is spanned by both $B\cup\{b\}$ and $E(L)-\{p,q\}$. If $\{p,q\}$ is an incomparable pair
in $L$, then $M'=M_2(L,p,q;N,A,B)$.
\end{lemma}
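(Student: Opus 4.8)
The plan is to reconstruct $M' = M_2(L,p,q;N,A,B)$ from $M(L,p,q;N,A,B)$ by showing that the rank function of $M'$ — equivalently, the set of flats spanning $p$ and the set of flats spanning $q$ — is forced by the stated hypotheses. Since a matroid extension by a non-loop element is determined by which flats of the base matroid span that element, and since the two new elements $p,q$ are added essentially independently (we will need to check that adding $p$ does not constrain the flats spanning $q$, and vice versa, beyond what $M_2$ already does), it suffices to pin down separately the flats of $M := M(L,p,q;N,A,B)$ that must span $p$, and those that must span $q$, and then verify these coincide with the flats that span $p$ (resp. $q$) in $M_2$.

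First I would invoke Lemma~\ref{separation}: since $\{p,q\}$ is incomparable in $L$, it is in particular independent and coindependent in $L$ (an incomparable pair cannot contain a loop or coloop, nor a pair that is dependent), so $(S_1, S_2) := (E(L)-\{p,q\},\ E(N)\cup\{a,b\})$ is a $3$-separation of $M$ with $\sqcap_M(A\cup\{a\}, S_1) = \sqcap_M(B\cup\{b\}, S_1) = 1$. Now I would apply Lemma~\ref{unique} to the element $p$, taking $Y_1 = E(L)-\{p,q\} = S_1$ and $Y_2 = A\cup\{a\} \subseteq S_2$: the hypotheses $\sqcap_M(Y_1, S_2) = 1$ (this is the local connectivity of the $3$-separation, which equals $1$ once we pass to $M$; I would double-check the exact value, adjusting $Y_1$ to a rank-contribution-$1$ subset of $S_1$ if needed — the natural candidate being the closure-intersection witnessing where $p$ sits), $\sqcap_M(S_1, Y_2) = 1$, and "$p$ spanned by both $Y_1$ and $Y_2$" are exactly what Lemma~\ref{retrieval} assumes. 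Lemma~\ref{unique} then tells us \emph{uniquely} which flats $F$ of $M$ span $p$. By the symmetric argument with $Y_2 = B\cup\{b\}$, the flats spanning $q$ are also uniquely determined.

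The remaining work is bookkeeping: having shown that the flats spanning $p$ (resp. $q$) in any extension satisfying the hypotheses are forced, I would verify that $M_2(L,p,q;N,A,B)$ itself satisfies these hypotheses — i.e. that in $M_2$ the element $p$ really is a non-loop spanned by both $A\cup\{a\}$ and $E(L)-\{p,q\}$, which is immediate from the construction (in $M_1$, $a$ is freely placed in $\mathrm{cl}(E(N)\cup\{p\})$, so $p \in \mathrm{cl}_{M_1}(E(N)\cup\{a\})$, and then $A\cup\{a\}$ spans $E(N)\cup\{a\}$ since $A$ is a basis of $N$; and $p \in E(L)-\{p,q\}$'s closure... wait — more carefully, $p$ lies in $\mathrm{cl}_{M_2}(\mathrm{cl}(E(N))\cap\mathrm{cl}(E(L)))$-type relations need checking) — and that two extensions of $M$ with the same $p$-spanning flats and the same $q$-spanning flats are equal. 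The last point requires a small argument that the extension by the ordered pair $(p,q)$ is determined by the two single-element extension data, which holds because $p$ and $q$ are each spanned by a subset of $E(M)$ (namely within $S_1$ and within $A\cup\{a\}$, resp.\ $B\cup\{b\}$) and hence neither needs the other to be a non-loop. I expect the main obstacle to be verifying the precise local-connectivity values — getting $\sqcap_M(Y_1,S_2)$, $\sqcap_M(S_1,Y_2)$ to equal exactly $1$ with the right choice of $Y_1, Y_2$ — so that Lemma~\ref{unique} applies cleanly; everything downstream is then forced.
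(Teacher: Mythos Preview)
Your overall plan --- use Lemma~\ref{unique} to show that the flats of $M$ spanning $p$ (respectively $q$) are forced, hence $M'$ is uniquely determined --- matches the paper's. The gap is in your choice of $Y_1$. You take $Y_1=S_1=E(L)-\{p,q\}$ and assert $\sqcap_M(Y_1,S_2)=1$ ``once we pass to $M$''; but $(S_1,S_2)$ is a $3$-separation, so $\sqcap_M(S_1,S_2)=2$, and Lemma~\ref{unique} does not apply with that choice. Your fallback (``adjusting $Y_1$ to a rank-contribution-$1$ subset \ldots\ the closure-intersection witnessing where $p$ sits'') does not name a concrete set, and this is precisely where the incomparability hypothesis does its real work. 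Because $\{p,q\}$ is incomparable in $L$, there exist $Y_p,Y_q\subseteq E(L)-\{p,q\}$ with $Y_p$ spanning $p$ but not $q$ and $Y_q$ spanning $q$ but not $p$. Then $Y_p$ meets $\cl_M(S_2)$ in the line through $p$ only (it cannot also span $q$), so $\sqcap_M(Y_p,S_2)=1$, and Lemma~\ref{unique} applies with $Y_1=Y_p$, $Y_2=A\cup\{a\}$; symmetrically for $q$ with $Y_1=Y_q$, $Y_2=B\cup\{b\}$. This is exactly the paper's proof, and it is the only substantive use of incomparability beyond ensuring $\{p,q\}$ is independent and coindependent.

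Once the correct $Y_1$'s are chosen, your bookkeeping is fine: $M_2$ visibly satisfies the same spanning conditions on $p$ and $q$, so Lemma~\ref{unique} forces $M'\del q=M_2\del q$ and $M'\del p=M_2\del p$; since each of $p,q$ is spanned in both matroids by a subset of $E(M)$, these two equalities give $M'=M_2$.
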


\begin{proof}
Let $M=M(L,p,q;N,A,B)$ and $M_2=M_2(L,p,q;N,A,B)$. 
Since $\{p,q\}$ is an incomparable pair in $L$, $\{p,q\}$ is both
independent and coindependent and there exist sets
$Y_p,Y_q\subseteq E(L)-\{p,q\}$ such that
$Y_p$ spans $p$ but not $q$ and $Y_q$ spans $q$ but not $p$.
Note that $\sqcap_M(Y_p,E(N)\cup\{a,b\}) = \sqcap_M(Y_p,A\cup \{a\}) = 1$ and
$\sqcap_M(Y_q,E(N)\cup\{a,b\}) = \sqcap_M(Y_q,A\cup \{a\}) = 1$.
Moreover $M'\del\{p,q\}=M_2\del\{p,q\}=M$. It follows from Lemma~\ref{unique} that 
$M'=M_2$.
\end{proof}

We can now show that $M(L,p,q;N,A,B)$ is not $\bF$-representable.
\begin{lemma}\label{CONii}
If $L$ is not representable over a field $\bF$ and $\{p,q\}$ is an incomparable pair in $L$, then
$M(L,p,q;N,A,B)$ is not $\bF$-representable either.
\end{lemma}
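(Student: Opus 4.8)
The plan is to prove the contrapositive: assuming $M=M(L,p,q;N,A,B)$ is $\bF$-representable, we reconstruct an $\bF$-representation of $L$, contradicting the hypothesis that $L$ is not $\bF$-representable. The key idea is that $M$ already ``remembers'' nearly all of $L$ (in fact $L\del\{p,q\}$ sits inside $M$), and the construction was rigged so that the only freedom lost is the precise placement of $p$ and $q$; we will recover that placement using the projective freedom available over an infinite field.

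First I would set up the geometry. Since $\{p,q\}$ is incomparable in $L$, it is independent and coindependent, so by Lemma~\ref{separation} the pair $(E(L)-\{p,q\},\,E(N)\cup\{a,b\})$ is a $3$-separation of $M$ with $\sqcap_M(A\cup\{a\},E(L)-\{p,q\})=\sqcap_M(B\cup\{b\},E(L)-\{p,q\})=1$. Take any $\bF$-representation of $M$, viewed projectively. The local-connectivity-$1$ conditions say that $\cl(A\cup\{a\})$ and $\cl(E(L)-\{p,q\})$ meet in a single point, call it $p^\ast$; similarly $\cl(B\cup\{b\})$ and $\cl(E(L)-\{p,q\})$ meet in a single point $q^\ast$. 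Now I would add the elements $p$ and $q$ back to the representation by placing them at $p^\ast$ and $q^\ast$ respectively (these are genuine points of the projective space, so this is a legitimate extension of the $\bF$-representation — here is where we use that $\bF$ is infinite only insofar as we need points to exist, but really the points $p^\ast,q^\ast$ are forced, so no genericity is needed at this step). I must check $p$ and $q$ are nonloops: $p^\ast\in\cl(A\cup\{a\})$, and if $p$ were a loop then $a\in\cl(A)$, contradicting that $a$ was freely placed outside $\cl(E(N))\supseteq\cl(A)$; similarly for $q$. Call the resulting $\bF$-representable matroid $M'$.

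Next I would invoke Lemma~\ref{retrieval}: $M'$ is an extension of $M$ by nonloop elements $p,q$ with $p$ spanned by both $A\cup\{a\}$ and $E(L)-\{p,q\}$, and $q$ spanned by both $B\cup\{b\}$ and $E(L)-\{p,q\}$, and $\{p,q\}$ is incomparable in $L$; hence $M'=M_2(L,p,q;N,A,B)$. Therefore $M_2(L,p,q;N,A,B)$ is $\bF$-representable. But $M_2$ is an $(N\oplus L)$-constructed matroid that contains $L$: indeed $M_2\del\{a,b,x_a\,(x\in A),\,y_b\,(y\in B)\}$ — equivalently, restricting to $E(L)$ together with whichever of the original $N$-elements survive — recovers $L$ as a restriction, since in $M_1$ the flat structure on $E(L)$ is untouched and the operations producing $M_2$ only delete elements outside $E(L)$ or relabel within $E(N)$. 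More carefully, $M_1(L,p,q;N,A,B)|E(L)=L$ by construction, and passing from $M_1$ to $M_2$ does not disturb $E(L)$, so $M_2|E(L)=L$. Since $\bF$-representability is closed under restriction (minors), $L$ is $\bF$-representable, contradicting the hypothesis. This completes the proof.

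The main obstacle is the first geometric step: verifying that the two local-connectivity-$1$ conditions really pin down single projective points $p^\ast$ and $q^\ast$ in the given representation of $M$, and that placing $p,q$ there produces exactly the incidence conditions demanded by Lemma~\ref{retrieval} rather than something freer or more degenerate — in particular that $p^\ast$ lies in $\cl_{M}(A\cup\{a\})\cap\cl_M(E(L)-\{p,q\})$ and nowhere worse, and that it is not a loop. Once that is in hand, Lemmas~\ref{retrieval} and the closure of representability under minors do the rest mechanically.
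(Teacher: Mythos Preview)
Your proposal is correct and follows essentially the same approach as the paper: assume $M$ is $\bF$-representable, use Lemma~\ref{separation} to locate projective points $p^\ast,q^\ast$ and extend $M$ to an $\bF$-representable $M'$, apply Lemma~\ref{retrieval} to identify $M'=M_2$, then restrict to $E(L)$ to recover $L$ and reach a contradiction. The paper compresses your geometric construction of $M'$ into a single sentence, but the content is identical; your only slightly muddled step is the nonloop argument for $p^\ast$, which in fact follows immediately from $\sqcap_M(A\cup\{a\},E(L)-\{p,q\})=1$ since this forces the two spans to meet in a one-dimensional subspace.
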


\begin{proof}
Let $M=M(L,p,q;N,A,B)$ and $M_2=M_2(L,p,q;N,A,B)$. 
Since $\{p,q\}$ is an incomparable pair in $L$, $\{p,q\}$ is both
independent and coindependent.
We may assume, towards a contradiction, that $M$ is $\bF$-representable.
Then, by Lemma~\ref{separation}, there is an $\bF$-representable extension 
$M'$ of $M$ by nonloop elements $p$ and $q$ such that
$p$ is spanned by both $A\cup\{a\}$ and $E(L)-\{p,q\}$ whereas
$q$ is spanned by both $B\cup\{b\}$ and $E(L)-\{p,q\}$.
By Lemma~\ref{retrieval}, we have $M'=M_2$.
However, $M'|E(L) = M_2|E(L) = L$, which is not $\bF$ representable.
This contradiction completes the proof.
\end{proof}

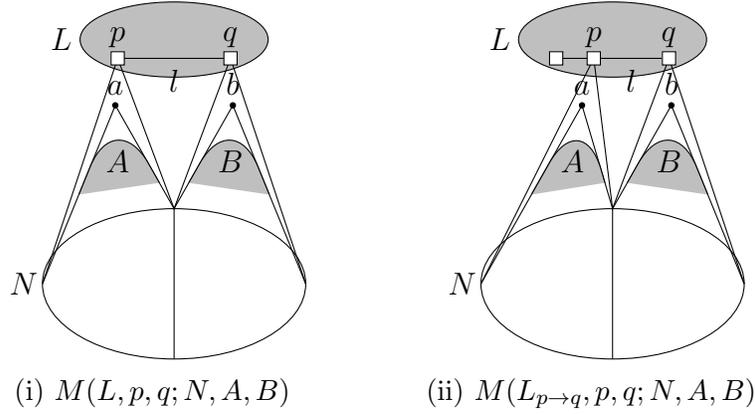
\begin{figure}
    \centering
	\begin{subfigure}[h]{0.45\textwidth}
	    \centering
		\begin{tikzpicture}[scale=0.5]
			\def\S{2}
			\coordinate [label=above:$p$] (P) at (3,8);
			\coordinate [label=above:$q$] (Q) at (6,8);
			\coordinate (N_1) at (1,2);
			\coordinate (N_2) at (8,2);
			\coordinate (N_3) at ($ 0.5*(N_1)+0.5*(N_2)+(0,\S) $);
			\filldraw[fill=lightgray] ($ 0.5*(P)+0.5*(Q)+(0,.5) $) circle [x radius=2.5, y radius=1];
			\coordinate [label=above:$p$] (p) at (P);
			\coordinate [label=above:$q$] (q) at (Q);
			\draw ($ 0.5*(N_1)+0.5*(N_2) $) circle [x radius=3.5, y radius=\S];
			\draw ($ 0.5*(N_1)+0.5*(N_2)+(0,\S) $) -- ($ 0.5*(N_1)+0.5*(N_2)-(0,\S) $);
			\node (L) at ($ (p)+(-1.5,0.5) $){$L$};
			\node (N) at ($ (N_1)-(.5,0) $){$N$};
			\coordinate [label=above:$a$] (a) at ($ 0.125*(N_1)+0.75*(p)+0.125*(N_3) $);
			\coordinate [label=above:$b$] (b) at ($ 0.125*(N_3)+0.75*(q)+0.125*(N_2) $);

			\filldraw (a) circle (2pt);
			\filldraw (b) circle (2pt);
			\draw (p) -- (q);
			\coordinate [label=below:$l$] (l) at ($ 0.5*(p)+0.5*(q) $);

			\draw (N_1) -- (p) -- (N_3) -- (q) -- (N_2);
			\draw (N_1) -- (a) -- (N_3) -- (b) -- (N_2);
			\filldraw[fill=lightgray,rounded corners=25pt] ($ 0.5*(N_1)+ 0.5*(a) $) -- (a) -- ($ 0.75*(N_3)+ 0.25*(a) $);
			\filldraw[fill=lightgray,rounded corners=25pt] ($ 0.75*(N_3)+ 0.25*(b) $) -- (b) -- ($ 0.5*(N_2)+ 0.5*(b) $);
			\node (A) at ($ 0.58*(a)+0.17*(N_1)+0.25*(N_3) $){$A$};
			\node (B) at ($ 0.58*(b)+0.17*(N_2)+0.25*(N_3) $){$B$};

			\filldraw[fill=white] ($ (p)+(-5pt,-5pt) $) -- ($ (p)+(-5pt,5pt) $) -- ($ (p)+(5pt,5pt) $) -- ($ (p)+(5pt,-5pt) $) -- cycle;
			\filldraw[fill=white] ($ (q)+(-5pt,-5pt) $) -- ($ (q)+(-5pt,5pt) $) -- ($ (q)+(5pt,5pt) $) -- ($ (q)+(5pt,-5pt) $) -- cycle;
		\end{tikzpicture}
		\caption{$M(L,p,q;N,A,B)$}
	\end{subfigure}
	\begin{subfigure}[h]{0.45	\textwidth}
	    \centering
		\begin{tikzpicture}[scale=0.5]
			\def\S{2}
			\coordinate [label=above:$p$] (P) at (3,8);
			\coordinate [label=above:$q$] (Q) at (6,8);
			\coordinate (N_1) at (1,2);
			\coordinate (N_2) at (8,2);
			\coordinate (N_3) at ($ 0.5*(N_1)+0.5*(N_2)+(0,\S) $);
			\filldraw[fill=lightgray] ($ 0.5*(P)+0.5*(Q)+(0,.5) $) circle [x radius=2.5, y radius=1];
			\coordinate [label=above:$p$] (p) at (4,8);
			\coordinate [label=above:$q$] (q) at (Q);
			\draw ($ 0.5*(N_1)+0.5*(N_2) $) circle [x radius=3.5, y radius=\S];
			\draw ($ 0.5*(N_1)+0.5*(N_2)+(0,\S) $) -- ($ 0.5*(N_1)+0.5*(N_2)-(0,\S) $);
			\node (L) at ($ (P)+(-1.5,0.5) $){$L$};
			\node (N) at ($ (N_1)-(.5,0) $){$N$};
			\coordinate [label=above:$a$] (a) at ($ 0.125*(N_1)+0.75*(p)+0.125*(N_3) $);
			\coordinate [label=above:$b$] (b) at ($ 0.125*(N_3)+0.75*(q)+0.125*(N_2) $);

			\filldraw (a) circle (2pt);
			\filldraw (b) circle (2pt);
			\draw (P) -- (Q);
			\coordinate [label=below:$l$] (l) at ($ 0.5*(p)+0.5*(q) $);

			\draw (N_1) -- (p) -- (N_3) -- (q) -- (N_2);
			\draw (N_1) -- (a) -- (N_3) -- (b) -- (N_2);
			\filldraw[fill=lightgray,rounded corners=25pt] ($ 0.5*(N_1)+ 0.5*(a) $) -- (a) -- ($ 0.75*(N_3)+ 0.25*(a) $);
			\filldraw[fill=lightgray,rounded corners=25pt] ($ 0.75*(N_3)+ 0.25*(b) $) -- (b) -- ($ 0.5*(N_2)+ 0.5*(b) $);
			\node (A) at ($ 0.58*(a)+0.17*(N_1)+0.25*(N_3) $){$A$};
			\node (B) at ($ 0.58*(b)+0.17*(N_2)+0.25*(N_3) $){$B$};

			\filldraw[fill=white] ($ (P)+(-5pt,-5pt) $) -- ($ (P)+(-5pt,5pt) $) -- ($ (P)+(5pt,5pt) $) -- ($ (P)+(5pt,-5pt) $) -- cycle;
			\filldraw[fill=white] ($ (p)+(-5pt,-5pt) $) -- ($ (p)+(-5pt,5pt) $) -- ($ (p)+(5pt,5pt) $) -- ($ (p)+(5pt,-5pt) $) -- cycle;
			\filldraw[fill=white] ($ (q)+(-5pt,-5pt) $) -- ($ (q)+(-5pt,5pt) $) -- ($ (q)+(5pt,5pt) $) -- ($ (q)+(5pt,-5pt) $) -- cycle;
		\end{tikzpicture}
		\caption{$M(L_{p\rightarrow q},p,q;N,A,B)$}
	\end{subfigure}
	\caption{Matroids that have the same minor when an element of $A\cup\{a\}$ is deleted
	        or an element of $B\cup\{b\}$ is contracted.}\label{Fcomparison}
\end{figure}

It remains to prove that proper minors of 
$M(L,p,q;N,A,B)$ are $\bF$-representable.
We do this by showing that every proper minor of $M(L,p,q;N,A,B)$
is also a minor of one of $M(L_{p\rightarrow q},p,q;N,A,B)$, $M(L_{q\rightarrow p},p,q;N,A,B)$,
or $M(L',p,q;N,A,B)$, where $L'$ is a proper minor of $L$.
For this we need two preliminary lemmas; the 
first shows that there is a unique set in 
$A\cup B\cup\{a,b\}$ that spans $p$ but not $q$.

\begin{lemma}\label{pnotq}
If $X\subseteq A\cup B\cup\{a,b\}$
spans $p$ but not $q$ in $M_2(L,p,q;N,A,B)$, then $X=A\cup\{a\}$.
\end{lemma}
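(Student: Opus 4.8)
The plan is to work inside $M_2 := M_2(L,p,q;N,A,B)$; write $T := A\cup B\cup\{a,b\}$ and $n := r(N)$. First I would record the structural facts that drop out of the free placements in the construction of $M_1(L,p,q;N,A,B)$ (each is immediate from the facts that freely placing a new element into a flat $F$ preserves the rank of every set not containing the new element, and makes the new element spanned by exactly those sets that already span $F$): the set $A\cup\{a\}$ is independent in $M_2$; its closure $P:=\cl_{M_2}(A\cup\{a\})$ has rank $n+1$ and satisfies $P\cap T=A\cup\{a\}$, $p\in P$, $q\notin P$; symmetrically $Q:=\cl_{M_2}(B\cup\{b\})$ has rank $n+1$ with $Q\cap T=B\cup\{b\}$, $q\in Q$, $p\notin Q$; and $T=(A\cup\{a\})\sqcup(B\cup\{b\})$ with $\cl_{M_2}(T)=\cl_{M_2}(P\cup Q)$ of rank $n+2$, so that every $z\in B\cup\{b\}$ lies outside the corank-one flat $P$ of $\cl_{M_2}(T)$ and hence $\cl_{M_2}\bigl((A\cup\{a\})\cup\{z\}\bigr)=\cl_{M_2}(T)\ni q$. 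I would also record, from Lemma~\ref{separation}, that $\sqcap_{M_2}(A\cup\{a\},E(L)-\{p,q\})=\sqcap_{M_2}(B\cup\{b\},E(L)-\{p,q\})=1$, and, by directly expanding the definition, that $\sqcap_{M_2}(Y_p,T)=1$ where $Y_p\subseteq E(L)-\{p,q\}$ spans $p$ but not $q$ in $L$ (such a set exists since $\{p,q\}$ is incomparable), and symmetrically $\sqcap_{M_2}(Y_q,T)=1$.

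The crux is the claim ($\star$): if $X\subseteq T$ spans $p$ but not $q$, then $A\cup\{a\}\subseteq\cl_{M_2}(X)$. Granting ($\star$), the lemma follows quickly: with such an $X$, suppose some $z\in X\cap(B\cup\{b\})$; then $\cl_{M_2}(X)\supseteq\cl_{M_2}\bigl((A\cup\{a\})\cup\{z\}\bigr)=\cl_{M_2}(T)\ni q$, contradicting that $X$ does not span $q$; hence $X\subseteq A\cup\{a\}$ and $\cl_{M_2}(X)\subseteq P$, so by ($\star$) we get $\cl_{M_2}(X)=P$; then $r(X)=r(P)=n+1=|A\cup\{a\}|$ together with $X\subseteq A\cup\{a\}$ forces $X=A\cup\{a\}$.

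To prove ($\star$), set $F:=\cl_{M_2}(X)$ and $F_2:=F\cap T$; then $X\subseteq F_2$ and $\cl_{M_2}(F_2)=F$, so $F\subseteq\cl_{M_2}(T)$, whence $F\cap E(L)\subseteq\cl_L(\{p,q\})$. As an excluded minor has no loops or parallel pairs, $L$ is simple, so any element of $\cl_L(\{p,q\})$ other than $p$ and $q$ spans $q$ together with $p$; since $p\in F$ and $q\notin F$, this gives $F\cap E(L)=\{p\}$. Now apply Lemma~\ref{unique} twice: with $e=p$, $S_1=E(L)-\{p\}$, $S_2=T$, $Y_1=Y_p$, $Y_2=A\cup\{a\}$, and with $e=q$, $S_1=E(L)-\{q\}$, $S_2=T$, $Y_1=Y_q$, $Y_2=B\cup\{b\}$ --- the hypotheses being exactly the $\sqcap$-identities above plus the easily verified fact that $(E(L)-\{p\},T)$ and $(E(L)-\{q\},T)$ are $3$-separations of $M_2\del p$ and $M_2\del q$. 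Since $F\cap(E(L)-\{p\})=\emptyset$, the disjunction in Lemma~\ref{unique} for $e=p$ collapses to $\sqcap_{M_2}(Y_p,F_2)=1$; running the $e=q$ case with $F\cap(E(L)-\{q\})=\{p\}$, and using $\sqcap_{M_2}(\{p\},F_2)=1$ (because $p\in F=\cl_{M_2}(F_2)$) and $\sqcap_{M_2}(\{p\},B\cup\{b\})=0$, the failure of $F$ to span $q$ reduces to $\sqcap_{M_2}(Y_q,F_2)\ne1$; and since $\sqcap_{M_2\con F_2}(Y_q,T\setminus F_2)=\sqcap_{M_2}(Y_q,T)-\sqcap_{M_2}(Y_q,F_2)=1-\sqcap_{M_2}(Y_q,F_2)\ge0$, in fact $\sqcap_{M_2}(Y_q,F_2)=0$. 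Thus $F_2$ satisfies $\sqcap_{M_2}(Y_p,F_2)=1$ and $\sqcap_{M_2}(Y_q,F_2)=0$, and it remains to deduce $A\cup\{a\}\subseteq F_2$; this I would obtain by passing to $M_1(L,p,q;N,A,B)$, where the free placements are explicit, and analysing the relevant closures.

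The main obstacle is exactly this last deduction --- turning the numerical data $\sqcap_{M_2}(Y_p,F_2)=1$, $\sqcap_{M_2}(Y_q,F_2)=0$ into the set containment $A\cup\{a\}\subseteq F_2$ --- which requires a careful, somewhat tedious analysis of the closures in the construction (morally: among subsets of $T$, the only way to span $p$ while ``reaching'' $q$ through no element of $T$ is to already span the diagonal copy of $N$ together with $a$), plus separate routine handling of degenerate cases such as $r(N)=0$ or $E(L)$ small. A variant that sidesteps both applications of Lemma~\ref{unique} is to prove ($\star$) by induction directly on the sequence of free placements defining $M_1$, keeping track at each stage of which sets span $a$, which span $b$, and which span $p$; this avoids setting up the $3$-separations but relies on essentially the same case analysis.
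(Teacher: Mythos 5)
Your proposal is incomplete, and you say so yourself: the crucial deduction — turning the data ``$F_2$ spans $p$ but not $q$'' into the containment $A\cup\{a\}\subseteq F_2$ — is left as a ``careful, somewhat tedious analysis.'' That deduction is not a routine detail; it is essentially the entire content of the lemma. The paper proves it by isolating the restriction $N_2:=M_2|(A\cup B\cup\{a,b,p,q\})$ via an alternate construction that does not reference $E(L)-\{p,q\}$ at all, observing that $C_1:=A\cup\{a,p\}$ is a circuit--hyperplane of $N_2$, and showing $C_1$ is the unique cyclic flat of $N_2$ containing $p$ but not $q$: any other such cyclic flat $F$ meets the cocircuit $C_2:=B\cup\{b,q\}$ in at least two elements, hence contains some $y\in B$, hence contains $b$ (each $y\in B$ is freer than $b$), hence by symmetry contains $a$; then $F-\{a,b\}$ is cyclic in $N_2\con\{a,b\}$, where $p$ is freely placed, so $F-\{a,b\}$ contains a basis of $N$, which together with $\{a,b\}$ spans $N_2$ and so forces $q\in F$, a contradiction. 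None of this structural work appears in your outline.

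The detour through Lemma~\ref{unique} also does not advance the argument. After you observe $F\cap E(L)=\{p\}$, both applications of Lemma~\ref{unique} collapse to exactly the statements $\sqcap_{M_2}(Y_p,F_2)=1$ and $\sqcap_{M_2}(Y_q,F_2)=0$, which are just reformulations of ``$F_2$ spans $p$ but not $q$'' — a fact you already have directly, since $X\subseteq F_2\subseteq F$, $X$ spans $p$, and $F$ misses $q$. So the connectivity bookkeeping returns you to the starting point without touching the real obstacle. A secondary issue: your deduction $F\cap E(L)=\{p\}$ relies on $L$ being simple, a hypothesis not present in the lemma statement (it does hold in the intended applications because $L$ is an excluded minor, but the paper's proof avoids needing it by working entirely inside the $L$-free restriction $N_2$). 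Your alternative suggestion — an induction over the sequence of free placements defining $M_1$ — is closer in spirit to what the paper does, but it is exactly the part you have not carried out.
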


\begin{proof}
Let $N_2$ denote the restriction of $M_2(L,p,q;N,A,B)$ to
$A\cup B\cup \{a,b,p,q\}$.
We consider an alternate construction of $N_2$.
Let $N_1$ be obtained from $N$ by adding coloops $p$ and $q$ and
adding $a$ freely to the flat spanned by $A\cup\{p\}$ and $b$
freely to the flat spanned by $B\cup\{q\}$; 
then, for each $x\in A$, we freely place $x_{a}$ in $\{x,a\}$
and, for each element $y\in B$, freely place $y_{b}$ in $\{y,b\}$.
Then, we let $N_2$ be obtained by deleting each $x\in A$ and $y\in B$
and relabelling each $x_{a}$ and $y_{b}$ as $x$ and $y$ respectively.
Let $C_1=A\cup\{a,p\}$ and $C_2=B\cup\{b,q\}$.
Note that $C_1$ is a circuit of $N_1$ and hence also in $N_2$. Moreover, since
each of the elements of $B$ has been ``lifted" towards $b$, the set $C_1$ is also a
hyperplane of $N_2$.

Note that, since $C_1$ is a circuit-hyperplane, it suffices to show
that $C_1$ is the only cyclic flat of $N_2$ that contains $p$ but not $q$.
Suppose that $F\neq C_1$ is a cyclic flat of $N_2$ that contains $p$ but not $q$.
Thus $F\cap C_2\neq \emptyset$. Since $F$ is cyclic and $C_2$ is a cocircuit,
$|C_2\cap F|\ge 2$. Since $q\not\in F$, the flat $F$ contains an element $y\in B$.
Since each element in $B$ is freer than $b$ in $N_2$, we have $b\in F$.
Similarly $a\in F$.
Now $F-\{a,b\}$ is a union of cycles in $N_2\con\{a,b\}$.
However, $N_2\con\{a,b\}= N_0\con\{a,b\}$. Now $a$ is freely placed
in the flat $E(N)\cup\{p\}$  and $\{a,p\}$ is a series-pair
in $N_0$, and hence $p$ is freely placed in $N_0\con \{a,b\}$.
However, $p\in F-\{a,b\}$, and hence $F-\{a,b\}$ contains a basis of $B'$ of $N$.
However, $B'\cup\{a,b\}\subseteq F$ is a basis of $N_2$, contrary to the fact that
$q$ is not contained in the flat $F$.
\end{proof}

The following result captures the difference between the matroids
$M(L,p,q;N,A,B)$ and $M(L_{p\rightarrow q},p,q;N,A,B)$. It will let us show that
when we delete an element in $A\cup\{a\}$ or contract an element in $B\cup\{b\}$ 
we will get the same minor, Figure~\ref{Fcomparison}.
\begin{lemma}\label{difference}
Let $\{p,q\}$ be an incomparable pair in $L$.
Let $X$ be a set of elements in $M(L,p,q;N,A,B)$. If 
$M(L,p,q;N,A,B)$ and $M(L_{p\rightarrow q},p,q;N,A,B)$ differ in rank on $X$, then $X\cap (A\cup B\cup\{a,b\}) = A\cup\{a\}$.
\end{lemma}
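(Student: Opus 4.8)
The plan is to establish the equivalent statement that $r_M(X)=r_{\tilde M}(X)$ whenever $X$ omits an element of $A\cup\{a\}$ or contains an element of $B\cup\{b\}$, where $M=M(L,p,q;N,A,B)$ and $\tilde M=M(L_{p\rightarrow q},p,q;N,A,B)$; these matroids share the ground set $E=(E(L)\del\{p,q\})\cup A\cup B\cup\{a,b\}$, and this condition is precisely the negation of $X\cap(A\cup B\cup\{a,b\})=A\cup\{a\}$. Writing $M_2=M_2(L,p,q;N,A,B)$ and $\tilde M_2=M_2(L_{p\rightarrow q},p,q;N,A,B)$, deletion of $\{p,q\}$ does not change ranks of subsets of $E$, so $r_M=r_{M_2}$ and $r_{\tilde M}=r_{\tilde M_2}$ on $E$ and it suffices to compare $r_{M_2}$ with $r_{\tilde M_2}$. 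As a preliminary I would record the local difference between $L$ and $L_{p\rightarrow q}$ on their common ground set $E(L)$: since $L_{p\rightarrow q}\del p=L\del p$ we have $r_{L_{p\rightarrow q}}(Z)=r_L(Z)$ whenever $p\notin Z$; a one-line closure computation on the line $\cl(\{p,q\})$ of $L$ gives the same equality whenever $\{p,q\}\subseteq Z$; and in every case $r_{L_{p\rightarrow q}}(Z)-r_L(Z)\in\{0,1\}$, the value being $1$ exactly when $p\in Z$ and $Z\del\{p\}$ spans $p$ but not $q$ in $L$. (Since $\{p,q\}$ is incomparable it is independent and coindependent in each of $L$ and $L_{p\rightarrow q}$.)

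Now $M_2$ is obtained from $N\oplus L$ by the fixed sequence of principal extensions in the construction --- add $a$ into $F_a:=\cl(E(N)\cup\{p\})$, add $b$ into $F_b:=\cl(E(N)\cup\{q\})$, then add each $x_a$ into $\cl(\{x,a\})$ and each $y_b$ into $\cl(\{y,b\})$ --- followed by deleting $E(N)$ and relabelling, and applying the identical sequence (with the same flats) to $N\oplus L_{p\rightarrow q}$ yields $\tilde M_2$. Iterating the rank formula for a principal extension expresses $r_{M_2}(X)$ as a single formula, built from $\min$ and $+$, in the values of $r_{N\oplus L}=r_N+r_L$, and $r_{\tilde M_2}(X)$ is the same formula in $r_{N\oplus L_{p\rightarrow q}}$. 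The flat $F_b$ contains $q$, and $b$ is --- at the stage it is introduced --- spanned by exactly the sets that span $F_b$; consequently every reference to $b$, to $F_b$, or to a line $\cl(\{y,b\})$ in that formula ultimately evaluates $r_L$ only on sets containing $q$, where $L$ and $L_{p\rightarrow q}$ agree. Dually $F_a$ contains $p$ and $a$ is spanned by exactly the sets that span $F_a$, so $r_{M_2}(X)$ and $r_{\tilde M_2}(X)$ can differ only through a reference to $F_a$ or to some $\cl(\{x,a\})$ that evaluates $r_L$ on a set $Z'\cup\{p\}$ with $q\notin Z'$ and with $Z'\cap E(L)$ spanning $p$ but not $q$ in $L$. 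Because $X$ contains neither $p$ nor $q$, such a reference is never made unless $a\in X$ or some $x_a\in X$; and one checks, using the closure identities $\cl(\{x_a:x\in A\}\cup\{a\})=\cl(E(N)\cup\{p\})=F_a$ in $M_1(L,p,q;N,A,B)$ together with the fact --- parallel to the proof of Lemma~\ref{pnotq} --- that no proper subset of $\{x_a:x\in A\}\cup\{a\}$ spans $F_a$ once $E(N)$ has been deleted, that the discrepancy can actually change the value of the $\min$-formula only when $A\cup\{a\}\subseteq X$ and no element of $B\cup\{b\}$ is present, since any such element, lying on a line through an element of $E(N)\subseteq F_a$, drags $b$ and hence $q$ into the set fed to $r_L$. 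Putting this together, $r_{M_2}(X)=r_{\tilde M_2}(X)$ unless $A\cup\{a\}\subseteq X$ and $X\cap(B\cup\{b\})=\emptyset$, which is what we want.

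The main obstacle is the bookkeeping in the last step: following, through the iterated rank formula --- whose $\min$-branches are the delicate part --- exactly which sets $X$ can force $r_L$ to be invoked on a set that contains $p$ but not $q$ and on which $L$ and $L_{p\rightarrow q}$ genuinely disagree, while tracking how the presence of an element of $B\cup\{b\}$ in $X$ re-introduces $q$ into that set. I expect this to come down to the circuit--hyperplane structure of $\{x_a:x\in A\}\cup\{a,p\}$ recorded in the proof of Lemma~\ref{pnotq}, so the full verification should run closely parallel to that lemma.
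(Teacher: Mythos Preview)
Your overall direction is sound---the discrepancy between $M$ and $\tilde M$ does ultimately trace back to $r_L$ versus $r_{L_{p\rightarrow q}}$ being evaluated on a set that spans $p$ but not $q$---but the proposal stops well short of a proof, and you say so yourself. Two concrete imprecisions: first, the rank formula for a principal extension is not a pure $\min$; the test for whether the new element is spanned compares $r(X\setminus e)$ with $r((X\setminus e)\cup F)$, and the first of these simply feeds $X\cap E(L)$ to $r_L$ without adjoining $p$ or $q$, so your assertion that ``every reference to $b$ \dots\ ultimately evaluates $r_L$ only on sets containing $q$'' is false as stated. Second, the claim that an element of $B\cup\{b\}$ in $X$ ``drags $b$ and hence $q$ into the set fed to $r_L$'' is a picture, not an argument; turning it into one would amount to reproving Lemma~\ref{pnotq} inside the iterated formula, which is exactly the ``bookkeeping'' you flag as missing.

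The paper avoids unwinding the construction altogether. It works in $M_2$ and $\tilde M_2$ (retaining $p$ and $q$), sets $S_1=E(L)$, $S_2=A\cup B\cup\{a,b\}$, $X_i=X\cap S_i$, and uses the identity $r(X)=r(X_1)+r(X_2)-\sqcap(X_1,X_2)$. Since $L\del p=L_{p\rightarrow q}\del p$ and $M_2|(S_2\cup\{p,q\})=\tilde M_2|(S_2\cup\{p,q\})$, the terms $r(X_1)$ and $r(X_2)$ coincide in the two matroids, so the entire discrepancy is carried by the local connectivity $\sqcap(X_1,X_2)$. Comparing this with $\sqcap(X_1,\{p,q\})$ and $\sqcap(X_2,\{p,q\})$---quantities that are visibly equal in $M_2$ and $\tilde M_2$---forces $\sqcap_{M_2}(X_1,X_2)=1$ and $\sqcap_{\tilde M_2}(X_1,X_2)=0$, whence $X_2$ spans $p$ but not $q$ in $M_2$; a single invocation of Lemma~\ref{pnotq} then yields $X_2=A\cup\{a\}$. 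This replaces your inductive tracking through the construction by one rank identity and one clean call to the lemma you already identified as the right tool.
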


\begin{proof}
Let $M=M_2(L,p,q;N,A,B)$ and $M' = M_2(L_{p\rightarrow q},p,q;N,A,B)$. Assume that $M$ and $M'$ differ in rank on $X$.
As $L_{p\rightarrow q}$ is freer than $L$, we have that $M'$ is freer than $M$ and, hence,
$r_{M'}(X)>r_{M}(X)$.
Let $S_1 = E(L)$, $S_2 =A\cup B\cup\{a,b\}$,
$X_1=X\cap S_1$,  
$X_2 = X\cap S_2$, and $\ell=\{p,q\}$.

For $M$ and $M'$ to differ in rank on $X$ it must be the case that $L$ and $L_{p\rightarrow q}$ to differ in rank 
on $X_1\cup \{p\}$. Thus $X_1$ spans $p$ but not $q$ in $L$.

Note that $r_M(X) = r_M(X_1) + r_M(X_2) - \sqcap_M(X_1,X_2)$ and
$r_{M'}(X) = r_{M'}(X_1) + r_{M'}(X_2) - \sqcap_{M'}(X_1,X_2)$, so
$\sqcap_M(X_1,X_2)> \sqcap_{M'}(X_1,X_2)$.
However, $\sqcap_M(X_1,\ell) = \sqcap_{M'}(X_1,\ell)=1$ and
$\sqcap_M(X_2,\ell) = \sqcap_{M'}(X_2,\ell)$. 
Hence $\sqcap_M(X_1,X_2)=1$ and $\sqcap_{M'}(X_1,X_2)=0$.
So $X_2$ spans $p$ in $M$ and, since $M|(S_2\cup \ell)= M'|(S_2\cup \ell)$,
$X_2$ also spans $p$ in $M'$. Since $\sqcap_{M'}(X_1,X_2)=0$, we have that 
$X$ does not span $q$ in $M'$ or in $M$.

Now the result follows from Lemma~\ref{pnotq}.
\end{proof}

We are now ready to prove that proper minors of  $M(L,p,q;N,A,B)$ are $\bF$-representable.
We will, in fact, prove the following more general result.
\begin{theorem}\label{minors}
Let $\cM$ be a natural class of matroids. If
\begin{itemize}
\item[(i)] $L$ is an excluded-minor for $\cM$,
\item[(ii)] $p$ and $q$ are an incomparable pair of elements in $L$ such that $L_{p\rightarrow q}$ and 
$L_{q\rightarrow p}$ are contained in $\cM$, 
\item[(iii)] $N$ is a matroid in $\cM$ with $E(N)\cap E(L)=\emptyset$, and
\item[(iv)] $(A,B)$ is a partition of $E(N)$ into bases,
\end{itemize}
then each proper minor of $M(L,p,q;N,A,B)$ is contained in $\cM$.
\end{theorem}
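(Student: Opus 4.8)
## Proof Proposal

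The plan is to show that every proper minor of $M = M(L,p,q;N,A,B)$ embeds as a minor of one of three matroids, each of which lies in $\cM$: namely $M(L_{p\rightarrow q},p,q;N,A,B)$, $M(L_{q\rightarrow p},p,q;N,A,B)$, or $M(L',p,q;N,A,B)$ for some proper minor $L'$ of $L$. Since $L$ is an excluded minor for $\cM$, any proper minor $L'$ of $L$ lies in $\cM$, and then (as $N\in\cM$ and $\cM$ is natural) the matroid $M(L',p,q;N,A,B)$ is $(N\oplus L')$-constructed and hence in $\cM$. Likewise $L_{p\rightarrow q},L_{q\rightarrow p}\in\cM$ by hypothesis~(ii), so the corresponding constructed matroids are in $\cM$; and $\cM$ is minor-closed, so membership of a proper minor of $M$ follows once it is exhibited as a minor of one of these three.

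A proper minor of $M$ is $M\del D\con C$ for disjoint $C,D\subseteq E(M)$ with $C\cup D\neq\emptyset$. I would split into cases according to how $C\cup D$ meets $E(N)=A\cup B$ (the relabelled copies) together with $\{a,b\}$ and the copy of $E(L)-\{p,q\}$. First, if $C\cup D$ contains an element of $E(L)-\{p,q\}$, then deleting or contracting it inside the $L$-part of the construction produces (after a routine check that the construction commutes with taking a minor of $L$ in the relevant element) a minor of $M(L',p,q;N,A,B)$ for $L'$ a proper minor of $L$. The substantive case is when $C\cup D\subseteq A\cup B\cup\{a,b\}$. Here I want to use Lemma~\ref{difference}: the matroids $M_2 = M_2(L,p,q;N,A,B)$ and $M_2' = M_2(L_{p\rightarrow q},p,q;N,A,B)$ agree in rank on every set $X$ unless $X\cap(A\cup B\cup\{a,b\}) = A\cup\{a\}$. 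So if $D$ contains an element of $A\cup\{a\}$, then in $M\del D$ (equivalently $M_2\del\{p,q\}\del D$) no surviving set $X$ can have $X\cap(A\cup B\cup\{a,b\}) = A\cup\{a\}$, whence $M\del D$ and the corresponding minor of $M_2'$ have equal rank function, i.e. $M\del D\con C$ is a minor of $M(L_{p\rightarrow q},p,q;N,A,B)$. Symmetrically (using the dual statement of Lemma~\ref{difference}, or reproving it with $L_{q\rightarrow p}$ and contraction in $B\cup\{b\}$), if $C$ contains an element of $B\cup\{b\}$, then $M\del D\con C$ is a minor of $M(L_{q\rightarrow p},p,q;N,A,B)$.

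It remains to handle $C\cup D\subseteq A\cup B\cup\{a,b\}$ with $D\cap(A\cup\{a\})=\emptyset$ and $C\cap(B\cup\{b\})=\emptyset$; that is, $D\subseteq B\cup\{b\}$ and $C\subseteq A\cup\{a\}$, with $C\cup D$ nonempty. If $D$ contains some $y\in B\cup\{b\}$: I would argue that $M\del D$ already kills the configuration distinguishing $M_2$ from $M_2'' := M_2(L_{q\rightarrow p},p,q;N,A,B)$ — indeed the analogue of Lemma~\ref{difference} says $M_2$ and $M_2''$ differ in rank only on sets meeting $A\cup B\cup\{a,b\}$ exactly in $B\cup\{b\}$, which is impossible after deleting $y$ — so $M\del D\con C$ is a minor of $M(L_{q\rightarrow p},p,q;N,A,B)$. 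Symmetrically, if $C$ contains some $x\in A\cup\{a\}$, contracting $x$ kills the $M_2$-versus-$M_2'$ difference and $M\del D\con C$ is a minor of $M(L_{p\rightarrow q},p,q;N,A,B)$. Since $C\cup D\neq\emptyset$, one of these two subcases applies, so every proper minor is covered.

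The main obstacle I anticipate is the bookkeeping in the first case — verifying cleanly that when we delete or contract an element of the $L$-part, the output of the construction applied to the minor $L'$ of $L$ really does contain the corresponding minor of $M(L,p,q;N,A,B)$ (one must check that freely placing $a,b$ and the lifted copies of $A,B$ commutes appropriately with the minor operation on $E(L)$, handling separately whether the contracted/deleted element lies in $\cl_L(E(N)\cup\{p\})$ etc.). A secondary point is to confirm that Lemma~\ref{difference} and its needed variants give equality of rank functions on the \emph{restricted} ground set after $\del D$ or $\con C$, not merely on sets avoiding the bad configuration in $M$ itself; this should follow because contracting/deleting an element of $A\cup\{a\}$ (resp. $B\cup\{b\}$) makes the bad set $A\cup\{a\}$ (resp. $B\cup\{b\}$) unattainable as $X\cap(A\cup B\cup\{a,b\})$ for any $X$ in the minor, so the two constructions have literally the same rank function there.
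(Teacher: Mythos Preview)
Your overall strategy matches the paper's: reduce to one-element minors (which suffices since $\cM$ is minor-closed, so the general $C,D$ case analysis is unnecessary) and show that each $M\del e$ or $M\con e$ equals the corresponding minor of $M(\tilde L,p,q;N,A,B)$ for $\tilde L$ one of $L\del e$, $L\con e$, $L_{p\rightarrow q}$, $L_{q\rightarrow p}$. Your worry about the $E(L)-\{p,q\}$ case is unfounded: none of the principal extensions in the construction are into flats meeting $E(L)-\{p,q\}$, so deletion and contraction of such an element commute with the construction on the nose, giving $M\del e=M(L\del e,p,q;N,A,B)$ and $M\con e=M(L\con e,p,q;N,A,B)$ directly.

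Your contraction cases, however, are misassigned and the justifications you give are false. You assert that contracting $x\in A\cup\{a\}$ ``kills the $M_2$-versus-$M_2'$ difference'' with $M_2'=M_2(L_{p\rightarrow q},p,q;N,A,B)$; it does not. The rank of $Y$ in $M\con x$ is computed from $r_M(Y\cup\{x\})$, and $(Y\cup\{x\})\cap(A\cup B\cup\{a,b\})$ can perfectly well equal $A\cup\{a\}$, so Lemma~\ref{difference} gives no conclusion there. The correct pairing is the opposite: contracting $f\in B\cup\{b\}$ forces $(Y\cup\{f\})\cap(A\cup B\cup\{a,b\})\ni f\notin A\cup\{a\}$, hence $M\con f = M(L_{p\rightarrow q},p,q;N,A,B)\con f$. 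Then the symmetry $M(L,q,p;N,B,A)=M(L,p,q;N,A,B)$ yields $M\con e = M(L_{q\rightarrow p},p,q;N,A,B)\con e$ for $e\in A\cup\{a\}$ and $M\del f = M(L_{q\rightarrow p},p,q;N,A,B)\del f$ for $f\in B\cup\{b\}$. Your claim that contracting an element of $B\cup\{b\}$ yields a minor of $M(L_{q\rightarrow p},\dots)$ has the same defect and the same fix. With these two swaps your argument is exactly the paper's.
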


\begin{proof}
Let $M=M(L,p,q;N,A,B)$.
If $e\in E(L)-\{p,q\}$, then, by construction,
$M\del e=M(L\del e,p,q;N,A,B)$ and $M\con e=M(L\con e,p,q;N,A,B)$.
Then, since $L\del e$, $L\con e$ and $N$ are all contained in 
$\cM$, the minors $M\del e$ and $M\con e$ are also contained in $\cM$.

Now, for $e\in A\cup\{a\}$ and $f\in B\cup\{b\}$, it follows from Lemma~\ref{difference} that
$M\del e=M(L_{p\rightarrow q},p,q;N,A,B)\del e$ and $M\con f=M(L_{p\rightarrow q},p,q;N,A,B)\con f$.
Then, since $L_{p\rightarrow q}$ and $N$ are all contained in 
$\cM$, the minors $M\del e$ and $M\con e$ are also contained in $\cM$.

Finally, since $M(L,q,p;N,B,A)=M(L,p,q;N,A,B)$, it follows that, for $e\in A\cup\{a\}$ and $f\in B\cup\{b\}$,
the minors $M\con e$ and $M\del f$ are contained in $\cM$.
\end{proof}

We have thus proved Theorem~\ref{construction}  --that $M=M(L,p,q;N,A,B)$ is an excluded-minor for $\bF$-representability;
by Lemma~\ref{CONii} we have that $M$ is not $\bF$-representable,
and by Lemma~\ref{minors} that all the minors of $M$ are $\bF$-representable.

We can now prove Theorem~\ref{main}, which we restate here for convenience.
\begin{blanktheorem}[Theorem~\ref{main}]
If $\bF$ is an infinite field and 
$\cM$ is a natural class that properly contains all $\bF$-representable matroids,
then each $\bF$-representable matroid is a minor of an excluded minor for $\bF$-representability
that is contained in $\cM$.
\end{blanktheorem}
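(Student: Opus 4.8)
The plan is to feed the construction of Section~\ref{CONSTRUCTION} with inputs manufactured by Lemmas~\ref{constructed} and~\ref{special-pair}. Fix an $\bF$-representable matroid $N_0$; we must produce an excluded minor for $\bF$-representability that lies in $\cM$ and has $N_0$ as a minor.

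First I would build the matroid $N$ together with a suitable partition of its ground set. Applying Lemma~\ref{constructed} to $N_0$ yields an $N_0$-constructed matroid $N$ that has an $N_0$-minor and whose ground set can be partitioned into two bases $A$ and $B$. Since the class of $\bF$-representable matroids is natural and is in particular closed under the operations defining $N_0$-constructed matroids, $N$ is $\bF$-representable; hence $N\in\cM$, as $\cM$ contains all $\bF$-representable matroids. After relabelling we may assume that $E(N)$ is disjoint from the ground set of the matroid $L$ produced next.

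Second I would obtain the excluded minor $L$ and the special pair $\{p,q\}$. The class $\cM_1$ of $\bF$-representable matroids is natural and is a proper subset of the natural class $\cM_2:=\cM$, so Lemma~\ref{special-pair} provides an excluded minor $L$ for $\bF$-representability with $L\in\cM$ and an incomparable pair $\{p,q\}$ of elements of $L$ such that both $L_{p\rightarrow q}$ and $L_{q\rightarrow p}$ are $\bF$-representable. The four hypotheses of Theorem~\ref{construction} are now met (using that $\bF$ is infinite), so $M:=M(L,p,q;N,A,B)$ is an excluded minor for the class of $\bF$-representable matroids.

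It remains to verify the two membership claims. Since $M$ is an $(N\oplus L)$-constructed matroid by definition of the construction, since $N\oplus L\in\cM$ because $\cM$ is closed under direct sums, and since a natural class contains every matroid constructed from one of its members, we get $M\in\cM$. Finally, the incomparable pair $\{p,q\}$ is in particular independent in $L$, so by the remark following the construction $N=M\con\{a,b\}\del E(L)$ is a minor of $M$, and $N$ in turn has $N_0$ as a minor; thus $N_0$ is a minor of $M$, completing the proof. The only work here is bookkeeping---lining up the conclusions of Lemmas~\ref{constructed} and~\ref{special-pair} with the hypotheses of Theorem~\ref{construction} and tracking membership in $\cM$ via closure under direct sums and under $N$-constructed operations; there is no substantive obstacle, since Theorem~\ref{construction} already does the hard part of certifying that $M(L,p,q;N,A,B)$ is an excluded minor.
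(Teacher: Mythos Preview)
Your proof is correct and follows essentially the same approach as the paper: assemble $N$ via Lemma~\ref{constructed}, assemble $L$ and $\{p,q\}$ via Lemma~\ref{special-pair}, invoke Theorem~\ref{construction} to make $M=M(L,p,q;N,A,B)$ an excluded minor, and then check that $M\in\cM$ and that $N_0$ is a minor of $M$. You even spell out a couple of points the paper leaves implicit (that $N$ is $\bF$-representable because $\bF$-representability is natural, and that incomparability of $\{p,q\}$ gives independence so the remark after the construction applies).
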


\begin{proof}
Let $N_0$ be an $\bF$-representable matroid. By Lemma~\ref{constructed},
there is an $N_0$-constructed matroid $N$, containing $N_0$ as a minor, and a partition 
$(A,B)$ of $E(N)$ into two bases. By Lemma~\ref{special-pair}, there is an excluded minor $L$ for 
$\bF$-representability  such that  $L$ is contained in $\cM$ and  such that $L$ 
contains an incomparable pair $\{p,q\}$ of elements where 
$L_{p\rightarrow q}$ and $L_{q\rightarrow p}$ are both $\bF$-representable.

Let $M=M(L,p,q;N,A,B)$. Note $M$ contains $N$ 
as the minor $M\con\{a,b\}\del E(L)$. By Theorem~\ref{construction}, $M$ is an excluded minor for $\bF$-representability.
Moreover, since $N$ and $L$ are both contained in the natural class $\cM$, the matroid $M$ is also
contained in $\cM.$
\end{proof}

\section{Further uses of the construction}

We do not know the answer to the following question, but
we suspect that the answer is negative for certain choices of $\cM_1$ and $\cM_2$.
\begin{question}
Let $\cM_1$ and $\cM_2$ be natural classes with $\cM_1\subsetneq \cM_2$.
Is it true that each matroid in $\cM_1$ is a minor of an excluded minor for $\cM_1$
that is contained in $\cM_2$?
\end{question}
The construction in the previous section brings us close to a positive answer.

Consider a matroid $N_0$ in $\cM_1$. By Lemma~\ref{constructed},
there is an $N_0$ constructed matroid $N$, containing $N_0$ as a minor, and a partition 
$(A,B)$ of $E(N)$ into two bases. By Lemma~\ref{special-pair}, there is an excluded minor $L$ for 
$\cM_1$  that is contained in $\cM_2$ and  such that $L$ 
contains an incomparable pair $\{p,q\}$ of elements where 
$L_{p\rightarrow q}$ and $L_{q\rightarrow p}$ are both contain in $\cM_1$

Let $M=M(L,p,q;N,A,B)$. Then $M$ contains $N$ 
as the minor $M\con\{a,b\}\del E(L)$. Since $N$ and $L$ are both contained in the natural class $\cM_2$, the 
matroid $M$ is also contained in $\cM_2$.
Moreover, by Theorem~\ref{minors},  each proper minor of $M$ is contained in $\cM_1$.
The only additional property that we require is that $M$ is not contained in $\cM_1$.

Let us review the argument  used to prove that $M\not\in\cM_1$ when $\cM_1$
is the class of matroids represented over an infinite field $\bF$.
Let $S_1 = E(L)-\{p,q\}$, $S_2 = A\cup B\cup\{a,b\}$, $Y_{2p} = A\cup \{a\}$ and
$Y_{2q}=B\cup\{b\}$. By Lemma~\ref{separation}, $(S_1,S_2)$ is a 
$3$-separation in $M$ and $\sqcap_M(S_1,Y_{2p}) = \sqcap_M(S_1,Y_{2q})=1$.
We proceed by way of contradiction, supposing that $M$ is $\bF$-representable.
Thus there is an $\bF$-representable matroid $M'$ obtained by extending
$M$ by two non-loop elements $p$ and $q$ such that $p$ is spanned by both 
$S_1$ and $Y_{2p}$ and $q$ is spanned by both $S_1$ and $Y_{2q}$; 
this is the key step where representability plays its crucial role.

Since $p$ and $q$ are incomparable in $L$, there exist sets $Y_{1p}$ and $Y_{1q}$
such that $Y_{1p}$ spans $p$ but not $q$ whereas $Y_{1q}$ spans $q$ but not $p$.
Note that $\sqcap_M(Y_{1p},S_2)=\sqcap_M(Y_{1p},Y_{2p})=1$ and that
$\sqcap_M(Y_{1q},S_2)=\sqcap_M(Y_{1q},Y_{2q})=1$. By Lemma~\ref{unique},
there is a unique way to extend $M$ by non-loop elements $p$ and $q$
so that $p$ is spanned by both $Y_{1p}$ and $Y_{2p}$ and
$q$ is spanned by both $Y_{1q}$ and $Y_{2q}$. Thus $M' = M_2(L,p,q;N,A,B)$.
However, this contradicts the fact that $M'$ is $\bF$-representable.

To extend this to other choices of $\cM_1$, we require that, if $M\in\cM_1$, then
there exists a matroid $M''$ in $\cM$ obtained by extending
$\cM$ by non-loop elements $p$ and $q$ such that
$p$ is spanned by both $Y_{1p}$ and $Y_{2p}$ and
$q$ is spanned by both $Y_{1q}$ and $Y_{2q}$. We will impose this condition
artificially.

Let $e$ be a non-loop element in a matroid $M$. We say that $M$ is a {\em pinned extension into a $3$-separation}
of $M\del e$ if there is a $3$-separation $(S_1,S_2)$ and sets $Y_1\subseteq S_1$ and $Y_2\subseteq S_2$
such that $\sqcap_{M\del e}(Y_1,S_2) = \sqcap_{M\del e}(S_1,Y_2)=1$ and
$e$ is spanned by both $Y_1$ and $Y_2$. 

The above discussion is summarized in the following result.

\begin{theorem}\label{general}
Let $\cM_1$ and $\cM_2$ be natural classes where $\cM_1\subsetneq \cM_2$.
If $\cM_1$ is closed under pinned extensions into $3$-separations, then
each matroid in $\cM_1$ is a minor of an excluded minor of $\cM_1$ that
is also in $\cM_2$.
\end{theorem}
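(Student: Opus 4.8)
The plan is to follow exactly the template laid out in the discussion preceding the theorem statement, which already assembles almost all the pieces; the only thing left is to verify the single missing property, namely that $M = M(L,p,q;N,A,B) \notin \cM_1$, and for that we will use the new closure hypothesis.

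First I would fix an arbitrary matroid $N_0 \in \cM_1$. Applying Lemma~\ref{constructed} gives an $N_0$-constructed matroid $N$ that has $N_0$ as a minor together with a partition $(A,B)$ of $E(N)$ into two bases; since $\cM_1$ is natural and $N_0 \in \cM_1$, we have $N \in \cM_1$. Applying Lemma~\ref{special-pair} to the pair $\cM_1 \subsetneq \cM_2$ yields an excluded minor $L$ for $\cM_1$ lying in $\cM_2$ with an incomparable pair $\{p,q\}$ of elements such that $L_{p\to q}$ and $L_{q\to p}$ are both in $\cM_1$. Relabel so that $E(L) \cap E(N) = \emptyset$, and set $M = M(L,p,q;N,A,B)$. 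Since $M$ is $(N\oplus L)$-constructed and both $N, L \in \cM_2$, which is natural, $M \in \cM_2$; and $M$ contains $N$ (hence $N_0$) as the minor $M\con\{a,b\}\del E(L)$ because $\{p,q\}$ is independent in $L$. Finally, Theorem~\ref{minors} (whose hypotheses are met by the choices above) gives that every proper minor of $M$ lies in $\cM_1$. So $M$ will be the desired excluded minor for $\cM_1$ inside $\cM_2$ provided we show $M \notin \cM_1$.

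For the remaining claim, suppose for contradiction that $M \in \cM_1$. Put $S_1 = E(L)-\{p,q\}$, $S_2 = A \cup B \cup \{a,b\}$, $Y_{2p} = A\cup\{a\}$ and $Y_{2q} = B\cup\{b\}$; since $\{p,q\}$ is incomparable in $L$ it is both independent and coindependent, so by Lemma~\ref{separation}, $(S_1,S_2)$ is a $3$-separation of $M$ with $\sqcap_M(S_1,Y_{2p}) = \sqcap_M(S_1,Y_{2q}) = 1$. Because $\cM_1$ is closed under pinned extensions into $3$-separations, we may first extend $M$ by a non-loop element $q$ pinned into this $3$-separation so that $q$ is spanned by both $S_1$ and $Y_{2q}$, staying in $\cM_1$; the resulting matroid still has the analogous $3$-separation structure witnessing a pin for $p$ (spanned by $S_1$ and $Y_{2p}$), so a second pinned extension produces a matroid $M' \in \cM_1$ obtained from $M$ by adding non-loop elements $p$ and $q$ with $p$ spanned by both $S_1$ and $Y_{2p}$, and $q$ spanned by both $S_1$ and $Y_{2q}$. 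Since $p,q$ are incomparable in $L$, there are sets $Y_{1p}, Y_{1q} \subseteq S_1$ with $Y_{1p}$ spanning $p$ but not $q$, and $Y_{1q}$ spanning $q$ but not $p$; one checks $\sqcap_M(Y_{1p},S_2) = \sqcap_M(Y_{1p},Y_{2p}) = 1$ and $\sqcap_M(Y_{1q},S_2) = \sqcap_M(Y_{1q},Y_{2q}) = 1$ (exactly as in the proof of Lemma~\ref{retrieval}). Then Lemma~\ref{unique}, applied once for $p$ and once for $q$, forces the extension $M'$ to be unique, hence $M' = M_2(L,p,q;N,A,B)$. But $M'|E(L) = M_2(L,p,q;N,A,B)|E(L) = L$, and $L \notin \cM_1$ since $\cM_1$ is minor-closed; this contradicts $M' \in \cM_1$. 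Therefore $M \notin \cM_1$, completing the proof.

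The main obstacle I anticipate is the bookkeeping in the double pinned extension: I must check that after pinning in the element $q$, the pair $(S_1, S_2 \cup \{q\})$ (or the appropriate refinement) still constitutes a $3$-separation of the new matroid together with witnessing local connectivities equal to $1$, so that the definition of ``pinned extension into a $3$-separation'' applies a second time for $p$. This requires re-running the local-connectivity computations of Lemma~\ref{separation} in the once-extended matroid, but since $q$ sits in the closure of $E(L)$ and also in the closure of $E(N)\cup\{a,b\}$, these are the same expanding-both-sides identities used throughout Section~\ref{PROPERTIES}, so no genuinely new idea is needed — only care. Everything else is a direct citation of earlier results.
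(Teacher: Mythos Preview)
Your proposal is correct and follows essentially the same route as the paper: assemble $M=M(L,p,q;N,A,B)$ from Lemmas~\ref{constructed} and~\ref{special-pair}, invoke Theorem~\ref{minors} for the proper minors, and then argue $M\notin\cM_1$ by using the pinned-extension closure to force $M_2(L,p,q;N,A,B)\in\cM_1$, contradicting $M_2|E(L)=L$.

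One small point of presentation: the paper's discussion introduces the sets $Y_{1p},Y_{1q}\subseteq S_1$ \emph{before} performing the pinned extensions and uses them as the actual pinning sets on the $S_1$-side (so that $p$ is pinned by $(Y_{1p},Y_{2p})$ and $q$ by $(Y_{1q},Y_{2q})$).  You instead describe the first extension only as ``$q$ spanned by $S_1$ and $Y_{2q}$'' and bring in $Y_{1q}$ afterwards for the uniqueness step.  Since the definition of a pinned extension requires a specific $Y_1\subseteq S_1$ with $\sqcap(Y_1,S_2)=1$ (and $S_1$ itself has $\sqcap(S_1,S_2)=2$), you should name $Y_{1q}$ up front as that set; then the extension you obtain is, by Lemma~\ref{unique}, exactly $M_2\del p$, and the second pinned extension (with $Y_{1p},Y_{2p}$) gives $M_2$ on the nose.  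This reordering also dissolves the ``main obstacle'' you flag: once $q$ is pinned by $(Y_{1q},Y_{2q})$ the resulting matroid is literally $M_2\del p$, so the required $3$-separation and local connectivities for the second step are those of $M_2\del p$, which are immediate from the construction.
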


Note that, by Lemma~\ref{unique}, if $M$ is a pinned extension into a $3$-separation
$(S_1,S_2)$ and $Y_1$ and $Y_2$ are the sets that ``pin" $e$, then 
$M$ is uniquely determined by $M\del e$, $(S_1,S_2)$, $Y_1$, and $Y_2$.
Therefore, the intersection of two classes that are both closed under
pinned extensions into $3$-separations will also be closed under
pinned extensions into $3$-separations.

\section{Acknowledgements}
We thank the referees for their helpful comments. We also thank Thomas Savitsky for pointing out an error in an earlier version of Theorem 3.3 in which we confused gammoids with transversal matroids.

\section*{References}
	\newcounter{refs}
	\begin{list}{[\arabic{refs}]}
		{\usecounter{refs}\setlength{\leftmargin}{10mm}\setlength{\itemsep}{0mm}}
		\item\label{Orientable}
		A. Bj\"{o}rner, M. Las Vergnas, B. Sturmfels, N. White, G. Ziegler,
		Oriented Matroids, Second Edition,
		Cambridge University Press, Cambridge (1999).

		\item\label{Lattice}
		J. Bonin,
		Lattice path matroids: the excluded minors,
		J. Combin. Theory Ser. B {\bf 100} (2010) 585--599.

		\item\label{Brylawski}
		T. Brylawski,
		An affine representation for transversal geometries,
		Studies in Appl. Math. {\bf 54} (1975) 143--160.

		\item\label{Gammoids}
		A. Ingleton, J. Piff,
		Gammoids and transversal matroids,
		J. Combin. Theory Ser. B {\bf 15} (1973), 51--68.

		\item\label{Knuth}
		D. Knuth,
		The asymptotic number of geometries,
		J. Combin. Theory. Ser. A {\bf 16} (1974), 398--400.
		
		\item\label{Lazarson}
		T. Lazarson,
		The representation problem for independence functions,
		J. London Math. Soc. {\bf 33} (1958), 21--25.

		\item\label{Mason}
		J. Mason,
		On the class of matroids arising from paths in graphs,
		Proc. London Math. Soc. {\bf 25} (1972), 55--74.

		\item\label{Matus}
		F. Mat\'{u}\v{s},
		Classes of matroids closed under minors and principal extensions,
		Combinatorica (2017). https://doi.org/10.1007/s00493-016-3534-y

		\item\label{MNW}
		E. Mayhew, M. Newman, G. Whittle,
		On excluded minors for real-representability,
		J. Combin. Theory Ser. B {\bf 99} (2009) 685--689.

		\item\label{Nelson}
		P. Nelson,
		Almost all matroids are nonrepresentable,
		Proc. London Math. Soc. {\bf 50} (2018), 245--248.

		\item\label{O}
		J. Oxley, 
		Matroid Theory,
		Oxford University Press, New York (2011).

		\item\label{Tuttebinary}
		W. Tutte,
		A homotopy theorem for matroids. I, II.
		Trans. Amer. Math. Soc. {\bf 88} (1958), 144--174.
	\end{list}

\end{document}